\documentclass[a4paper]{amsart}
\synctex=1

\usepackage[utf8]{inputenc}
\usepackage[T1]{fontenc}

\usepackage{geometry}
\usepackage{amssymb}
\usepackage{amsfonts}
\usepackage{amsthm}
\usepackage{calrsfs}
\usepackage{array}
\usepackage{bbm}
\usepackage{stmaryrd}
\usepackage{hyperref}
\usepackage{enumerate}
\usepackage{mathabx}
\usepackage{enumitem}
\usepackage[demo]{graphicx}
\geometry{margin=1in}
\usepackage{caption}
\usepackage{subcaption}
\usepackage{stmaryrd}

\usepackage{csquotes}

\usepackage{mathtools}
\mathtoolsset{showonlyrefs=true}
\usepackage[svgnames]{xcolor}
\usepackage{hyperref}

\pagestyle{plain}
\frenchspacing

\usepackage{tikz}
\usetikzlibrary{positioning}

\newcommand{\R}{\mathbb{R}}

\pagestyle{plain}
\frenchspacing

%\addtolength{\textwidth}{2cm}
%\addtolength{\hoffset}{-1cm}
%\addtolength{\textheight}{2cm}
%\addtolength{\voffset}{-1cm}

\newcommand{\jap}[1]{\langle #1 \rangle}

\newcommand{\fia}{\mathbbm{1}_{|\Phi-\alpha|<M}}

\theoremstyle{plain}
\newtheorem{thm}{Theorem}[section]
\newtheorem*{thm*}{Theorem}
\newtheorem{prop}[thm]{Proposition}

\newtheorem{lem}[thm]{Lemma}

\theoremstyle{definition}

\theoremstyle{remark}
\newtheorem{nb}[thm]{Remark}

\numberwithin{equation}{section}
%\makeatletter
%% \def\@seccntformat#1{\csname the#1\endcsname\quad}% Default
%\def\@seccntformat#1{\csname the#1\endcsname$.$\hspace*{0.5em}}
%\makeatother
%
%\titleformat*{\section}{\large\bfseries}

\newtheoremstyle{mytheoremstyle} % name
{\topsep}                    % Space above
{\topsep}                    % Space below
{}                   % Body font
{}                           % Indent amount
{\scshape}                   % Theorem head font
{.}                          % Punctuation after theorem head
{.5em}                       % Space after theorem head
{}  % Theorem head spec (can be left empty, meaning ‘normal’)

\theoremstyle{mytheoremstyle} 
\theoremstyle{mytheoremstyle} 

\makeatletter
\@namedef{subjclassname@2020}{%
	\textup{2020} Mathematics Subject Classification}
\makeatother

\date{}
\author{Simão Correia and Pedro Leite}

\title{Sharp local existence and nonlinear smoothing for dispersive equations with higher-order nonlinearities}

\subjclass[2020]{35Q53, 35Q55, 35A01, 35B65, 42B37} %     
\keywords{Dispersive equation. Local existence. Nonlinear smoothing. Multilinear estimates.}

\thanks{S. C. was partially supported by Funda\c{c}\~ao para a Ci\^encia e Tecnologia, through CAMGSD, IST-ID
	(projects UIDB/04459/2020 and UIDP/04459/2020). S. C. and P. L. were  partially supported by Funda\c{c}\~ao para a Ci\^encia e Tecnologia, through the project NoDES (PTDC/MAT-PUR/1788/2020).}

\allowdisplaybreaks

\begin{document}
\maketitle
\begin{abstract}
	We consider a general nonlinear dispersive equation with monomial nonlinearity of order $k$ over $\R^d$. We construct a rigorous theory which states that higher-order nonlinearities and higher dimensions induce sharper local well-posedness theories. More precisely, assuming that a certain positive multiplier estimate holds at order $k_0$ and in dimension $d_0$, we prove a sharp local well-posedness result in $H^s(\R^d)$ for any $k\ge k_0$ and $d\ge d_0$. Moreover, we give an explicit bound on the gain of regularity observed in the difference between the linear and nonlinear solutions, confirming the conjecture made in \cite{cos}. The result is then applied to generalized Korteweg-de Vries, Zakharov-Kuznetsov and nonlinear Schrödinger equations.
\end{abstract}
%%%%%%%%%%%%%%%%%%%%%%%%%%%%%%%%%%%%%%%%%%%%%%%%%%%%%%%%%%%%%%%%%%%%%%
\section{Introduction}
%%%%%%%%%%%%%%%%%%%%%%%%%%%%%%%%%%%%%%%%%%%%%%%%%%%%%%%%%%%%%%%%%%%%%%
\subsection{Setting and motivation of the problem} We consider a nonlinear dispersive equation of the form
\begin{equation}\label{eq:geral}
\partial_tv + iL(D)v = N[v],\qquad v(0)=v_0\in H^s(\R^d),
\end{equation}
where $L(D)$, $D=\nabla/i$, is a spatial pseudo-differential operator defined, in frequency variables, by a real smooth multiplier $L(\xi)$, and $N$ is a nonlinear term involving $k$ powers of either $u$ or $\bar{u}$. Examples include the generalized Korteweg-de Vries equation,
\begin{equation}\label{gkdv}\tag{gKdV}
	\partial_tv + \partial_x^3v = \partial_x(v^k),
\end{equation}
the generalized Zakharov-Kuznetsov equation over $\R^d$,
\begin{equation}\label{gzk}\tag{gZK}
	\partial_tv + \partial_x\Delta v = \partial_x(v^k),
\end{equation}
and the nonlinear Schrödinger equation over $\R^d$,
\begin{equation}\label{nls}\tag{NLS}
	i\partial_tv +\Delta v = \pm|v|^{k-1}v.
\end{equation}

The problem of determining the precise range of regularities for which the initial-value problem \eqref{eq:geral} is locally well-posed in $H^s(\R^d)$ has been the focus of intensive research for the past sixty years. It is impossible to give a detailed list of references, as each individual equation has its own features and consequently its own local well-posedness theory; we simply refer to the monographs \cite{caz_book, tzirakis_book, linaresponce_book, tao_book}.

To simplify our discussion, let us assume that $L(\xi)$ is homogeneous of order $\ell$ (see Remark \ref{rem:L} below for the general case) and that the nonlinear term presents a derivative loss of order $n$. In this case, the equation admits a scaling invariance: if $v$ is a solution, so is
$$
v_\lambda(t,x)=\lambda^{\frac{\ell-n}{k-1}}v(\lambda^\ell t, \lambda x).
$$
As such, the critical regularity exponent (that is, the value of $s$ for which the scaling leaves the $\dot{H}^s$ norm invariant) is given by
\begin{equation}
	s_c=s_c(k,d)=\frac{d}{2}-\frac{\ell-n}{k-1}.
\end{equation}
For regularities $s<s_c$, one generally expects ill-posedness, as larger initial data paradoxically yield longer times of existence (see, for example, the discussion in \cite[Section 3.1]{tao_book}). The problem of determining the optimal threshold $s_{\text{LWP}}\ge s_c$ for which local well-posedness holds for $s>s_{\text{LWP}}$ is an extremely hard problem and depends on various factors: the dispersion $\ell$, the derivative loss $n$, the dimension $d$, the order of the nonlinearity $k$ and even on the specific structure of the dispersion and the nonlinear term. 

Despite the complexity of the problem at hand, one can make two heuristic arguments. First, higher values of $k$ induce weaker short-time nonlinear effects and thus should allow for larger ranges of local well-posedness. Second, higher dimensions yield stronger local-in-time dispersion, opening the door for an improved regularity threshold. As such, one may expect the following statement:
\begin{equation}\label{eq:folklore}
	\textit{If $s_c=s_{\text{LWP}}$ for some $k=k_0$ and $d=d_0$, then it should also hold for $k\ge k_0$ and $d\ge d_0$.}
\end{equation}

In practice, the validity of the above statement has always been checked \textit{a posteriori}, after proving sharp local well-posedness results for all $k\ge k_0$ (independently). As such, while a rigorous proof of this conjecture is currently unavailable, it has been generally accepted as ``folklore'' among the dispersive community.

Another related problem concerning \eqref{eq:geral} is the nonlinear smoothing effect, which asserts that the difference between the linear and nonlinear evolutions with the same initial data  is actually smoother than the initial data. This can be seen as a form of semilinearity of the equation in terms of regularity, as the equation is a ``regular'' perturbation of the linear equation. This feature was first observed in \cite{bonasaut} and, since then, several works prove nonlinear smoothing for various dispersive equations \cite{bpss,  tzirakis1, tzirakis2,   isazamejiatzevtkov, imos, keraanivargas, linarespastorsilva, linaresponcesmith,   linaresscialom} 

In the more recent references cited above, nonlinear smoothing has been studied in terms of a gain in Sobolev regularity. That is, given initial data in $H^s$, prove that the difference between the linear and nonlinear evolutions is in $H^{s+\epsilon}$ for all $t$. In this direction, the first author, together with Oliveira and Silva, conjectured in \cite{cos} that, for $k\ge 3$, one should be able to prove nonlinear smoothing for
\begin{equation}\label{eq:smoothing}
	\epsilon<\epsilon_c(k,d,s):=\min\{ (k-1)(s-s_{\text{LWP}}), \ell-n-1 \}.
\end{equation} 
To put it briefly, the bound $\ell-n-1$ is connected to a $high\times low \times \dots \times low\to high$ interaction, where the dispersion can be fully exploited but where the order of the nonlinearity is irrelevant. On the other hand, the bound $(k-1)(s-s_{\text{LWP}})$ is related to a $high\times high \times \dots \times high\to high$ interaction, where one may transfer freely the extra regularity $s-s_{\text{LWP}}$ from the ingoing frequencies to the outgoing frequency.

\medskip
The main goal of this work is to provide a concrete theoretical basis for statement \eqref{eq:folklore} and conjecture \eqref{eq:smoothing}. Indeed, we will prove that, whenever a specific positive multiplier estimate - called \textit{flexible frequency-restricted estimate} -  holds for some $k=k_0$ and $d=d_0$, then, \textit{for all} $k\ge k_0$ \textit{and} $d\ge d_0$, $s_c=s_{\text{LWP}}$ and \eqref{eq:smoothing} holds. 

\subsection{Applications}

Before we move to the technical description of the abstract induction results that validate \eqref{eq:folklore} and \eqref{eq:smoothing}, let us first present three different applications.

\medskip

First, consider the generalized KdV equation with $k\ge 5$,
\begin{equation}
	\partial_tv + \partial_x^3v = \partial_x(v^k).
\end{equation}
The initial-value problem in locally well-posed in $H^s(\R)$ for any $s>s_c(k,d)=\frac{1}{2}-\frac{2}{k-1}$ (\cite{kpv_gkdv}). The proof of this result uses a combination of local smoothing estimates and maximal estimates, together with a Leibniz rule and a chain rule for fractional derivatives. Concerning the nonlinear smoothing property, there are currently no results in this direction. In Section \ref{sec:kdv}, we will prove the validity of the flexible frequency-restricted estimate for $k_0=5$. In particular,
\begin{thm}\label{thm:gkdv}
	Given $k\ge 5$, equation \eqref{gkdv} is locally well-posed in $H^s(\R)$, for any $s>s_c(k,1)=\frac{1}{2}-\frac{2}{k-1}$. Moreover, the corresponding flow presents a nonlinear smoothing effect of order
	$$
	\epsilon<\epsilon_c(k,1,s)=\min\left\{(k-1)\left(s-\frac{1}{2}+\frac{2}{k-1}\right), 1\right\}.
	$$
\end{thm}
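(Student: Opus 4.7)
The strategy is to reduce the theorem to the abstract induction framework established earlier in the paper, with base case $(k_0,d_0)=(5,1)$. Once the flexible frequency-restricted estimate is verified for the quintic one-dimensional gKdV, both the sharp local well-posedness in $H^s(\R)$ for $s>s_c(k,1)$ and the quantitative nonlinear smoothing bound follow for every $k\ge 5$ by the main abstract theorem. Note the numerical compatibility: for \eqref{gkdv} one has dispersion exponent $\ell=3$ and derivative loss $n=1$, so $\ell-n-1=1$, which matches the constant appearing inside the minimum in $\epsilon_c(k,1,s)$. The choice $k_0=5$ is the $L^2$-critical (mass-critical) case where $s_c=0$, the smallest $k$ for which one expects $s_c=s_{\mathrm{LWP}}$ for gKdV.

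The core analytic input is therefore a multilinear estimate for the quintic symbol
$$
\mathfrak{m}(\xi_1,\dots,\xi_5)=\xi,\qquad \xi=\xi_1+\cdots+\xi_5,
$$
weighted by modulation factors $\langle\tau_j-\xi_j^3\rangle^{1/2}$ and restricted to dyadic frequency blocks. The geometric object driving the estimate is the resonance function
$$
\Omega(\xi_1,\dots,\xi_5)=\xi^3-\sum_{j=1}^5\xi_j^3,
$$
whose size controls how much modulation weight may be exchanged for frequency localization. After a Littlewood--Paley decomposition with dyadic sizes $N_1\ge N_2\ge\dots\ge N_5$ and output size $N$, the proof splits into a case analysis based on the relative sizes of these parameters and on the largest available modulation.

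The routine cases are the $high\times low\times\cdots\times low\to high$ and intermediate configurations, where one obtains the lower bound $|\Omega|\gtrsim N_1^2 N_2$ (or a comparable expression involving the second-largest frequency); this permits converting modulation weights into enough decay to sum in $N$. The truly delicate case is the fully resonant $high\times high\times\cdots\times high\to high$ interaction, where $\Omega$ can be arbitrarily small. Here one must combine a transversality/counting argument for the level sets of $\Omega$ on the hyperplane $\xi_1+\cdots+\xi_5=\xi$ with the extra $\xi$ factor in the symbol and the flexibility built into the estimate (namely, the freedom to place the derivative on any one of the high inputs or on the output).

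The main obstacle will be obtaining the high-high case with a \emph{uniform}, $N_1$-independent constant, i.e.\ genuinely matching $s_c$ rather than losing an $\epsilon$. The plan to overcome this is a van der Corput / stationary phase computation on the quartic surface $\{\Omega=\mu\}\cap\{\sum\xi_j=\xi\}$, exploiting the fact that in one dimension five frequencies provide enough degrees of freedom for the gradient $\nabla\Omega$ to be nondegenerate away from a lower-dimensional resonant stratum, which may be handled by the flexibility in distributing the $\xi$ factor. Once this base estimate is established, the abstract induction machinery will propagate it to all $k\ge 5$ essentially for free, yielding the claimed local well-posedness and the sharp smoothing exponent $\epsilon_c(k,1,s)$.
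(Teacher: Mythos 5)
Your architectural plan is exactly the paper's: reduce to the abstract induction theorem (Theorem \ref{thm:ind_k}) with base case $(k_0,d_0)=(5,1)$, verify the flexible frequency-restricted estimate there, and let the machinery propagate the result to all $k\ge 5$. The numerology you check ($\ell-n-1=1$, $s_c(5,1)=0$) is correct.

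There is, however, a genuine gap in your description of the base estimate, and it is of the kind that would prevent the induction from closing. The object the paper proves in Proposition \ref{prop:5kdv} is not an $X^{s,b}$-multilinear bound organized by Littlewood--Paley blocks and modulation weights $\langle\tau_j-\xi_j^3\rangle^{1/2}$ (that reduction is already absorbed into Lemma \ref{lem:interpol}); it is a pure frequency-space \emph{sublevel-set} estimate: for every $M\ge 1$ and some fixed partition $A\sqcup A^c$ of the indices, one must bound the weighted measure of $\{|\Phi-\alpha|<M\}\cap\Gamma_\xi^\sigma$ by $M^{1^-}$, \emph{uniformly over the deformation $\sigma$}, the shift $\alpha$, and the frozen frequencies. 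The adjective ``flexible'' refers to the free deformation $\sigma$ of the convolution hyperplane and the freedom to split the weights into $(s_j,s_j')$, $(\epsilon_0,\epsilon_0')$ — not, as you write, the freedom to move the $\xi$ factor onto a different input. The $\sigma$-uniformity is precisely the mechanism by which Theorem \ref{thm:ind_k} descends from $k$ to $k_0$: erasing $k-k_0$ small frequencies shifts the hyperplane by $\sigma=-\sum_{j>k_0}\xi_j$, and if your base estimate is only proved at $\sigma=0$, you have nothing to induce from. Your sketch would deliver a standard quintic multilinear estimate, which does not feed the abstract theorem.

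The actual verification is a case analysis on which $|\xi_j|$ are comparable: in nonstationary regions one uses $|\partial_{\xi_j}\Phi|\gtrsim|\xi|^2$ and changes variables $\xi_j\mapsto\Phi$; in genuinely stationary configurations one rescales $p_j=\xi_j/\xi$, $\Phi=\xi^3 P$, checks that the critical points of $P$ are nondegenerate, and invokes Morse's lemma with parameters to normalize $P$ to a signed quadratic form and read off the measure of $\{|P-\tilde\alpha|<M/|\xi|^3\}$. Your van der Corput / stationary-phase idea points at the right geometry, but the quantity to control is a sublevel-set measure on $\Gamma_\xi^\sigma$, not an oscillatory integral, and the nondegeneracy must be verified at the critical points (not merely ``away from a resonant stratum''). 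Reformulating your plan as a direct, $\sigma$-uniform proof of the frequency-restricted estimate, with Morse's lemma in the resonant cases, would bring it in line with the paper.
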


\begin{nb}
	For the generalized KdV, one actually has $s_{\text{LWP}}=s_c$ for $k\ge 4$. Since the case $k=4$ has already been studied in \cite{cos} and the proof of the flexible frequency-restricted estimate is more challenging when $k=4$, we have decided to restrict ourselves to $k\ge 5$.
\end{nb}

\medskip

The second application concerns the generalized Zakharov-Kuznetsov equation
\begin{equation}
	\partial_tv + \partial_x\Delta v = \partial_x(v^k).
\end{equation}
Let us first consider the case $d=2$ and $k\ge 5$. In \cite{ribaudvento}, Ribaud and Vento proved the local well-posedness of \eqref{gzk} for $s>s_c(k,2)=1-\frac{2}{k-1}$, using ideas similar to those of the \eqref{gkdv} case. Regarding nonlinear smoothing, the only results available concern $k=2,3$ (\cite{cos, linarespastorsilva}) and nothing is known for $k\ge 5$. As observed in \cite{herrgrunrock}, the \eqref{gzk} equation is equivalent to
\begin{equation}\label{zksym}\tag{gZK\textsubscript{sym}}
	\partial_tv + (\partial^3_{x}+\partial_{y}^3) v = (\partial_x + \partial_y)(v^k).
\end{equation}
Observe that the linear part is now a sum of two independent one-dimensional operators, each of them related to \eqref{gkdv}
As such, we are able to transfer the sharp results for the \eqref{gkdv} to \eqref{gzk} when $d=2$:
 \begin{thm}\label{thm:2dzk}
 	Fix $d=2$. Given $k\ge 5$, equation \eqref{gzk} is locally well-posed in $H^s(\R^2)$, for any $s>s_c(k,2)=1-\frac{2}{k-1}$. Moreover, the corresponding flow presents a nonlinear smoothing effect of order
 	$$
 	\epsilon<\epsilon_c(k,2,s)=\min\left\{(k-1)\left(s-1+\frac{2}{k-1}\right), 1\right\}.
 	$$
 \end{thm}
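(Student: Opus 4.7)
The plan is to exploit the equivalence between \eqref{gzk} and \eqref{zksym} in two dimensions, together with the separable structure of the linear multiplier $L(\xi,\eta) = \xi^3 + \eta^3$ for \eqref{zksym}, to transfer the one-dimensional flexible frequency-restricted estimate (FFRE) for \eqref{gkdv} at $k_0=5$, established in Section \ref{sec:kdv}, into a two-dimensional FFRE for \eqref{zksym} at $(k_0,d_0)=(5,2)$. Invoking the abstract induction theorem of this work, sharp local well-posedness and nonlinear smoothing then follow for every $k\geq 5$ with $d=2$.

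The core step is the derivation of the 2D FFRE from the 1D one. The $k$-linear resonance function for \eqref{zksym} decomposes as $\Omega(\xi_1,\eta_1,\ldots,\xi_k,\eta_k) = \Omega^{(1)}(\xi_1,\ldots,\xi_k) + \Omega^{(1)}(\eta_1,\ldots,\eta_k)$, where $\Omega^{(1)}$ is the cubic resonance of \eqref{gkdv}, and the convolution constraint $\sum_j(\xi_j,\eta_j)=(\xi,\eta)$ separates into independent constraints on the $\xi$- and $\eta$-coordinates. I would cover the level set $\{|\Omega-\alpha|<M\}$ by product sets $\{|\Omega^{(1)}(\xi_\bullet)-\beta|<M\}\times\{|\Omega^{(1)}(\eta_\bullet)-(\alpha-\beta)|<M\}$ as $\beta$ varies on a grid of spacing $\sim M$, apply Fubini to decouple the $\xi$- and $\eta$-integrations, and control each factor by the 1D FFRE at its own offset. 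The flexibility of the FFRE, namely its uniformity in the offset, is precisely what makes this decoupling compatible with the subsequent sum over $\beta$.

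With the base-case FFRE at $(5,2)$ in hand, the induction theorem upgrades it to every $k\geq 5$ with $d=2$, which yields $s_{\text{LWP}} = s_c(k,2) = 1 - 2/(k-1)$ and the nonlinear smoothing exponent $\epsilon_c(k,2,s) = \min\{(k-1)(s-s_c(k,2)),\,1\}$ stated in the theorem; here $\ell=3$ and $n=1$, so $\ell-n-1 = 1$, explaining the second term of the minimum. The main obstacle I anticipate is controlling the number and range of offsets $\beta$ needed to cover the 2D resonance level set: a naive summation would produce a loss depending on the largest input frequency. Restricting $\beta$ to the effective range $|\beta|,|\alpha-\beta| \lesssim N_{\max}^3$, where $N_{\max}$ is the largest frequency scale appearing in the multilinear interaction, and exploiting any monotonicity of the 1D bound in the offset, should absorb this sum with at most a harmless logarithmic loss, compatible with the strict inequality $s>s_c$ in the conclusion.
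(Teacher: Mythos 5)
Your high-level plan — symmetrize \eqref{gzk} to \eqref{zksym}, use the one-dimensional quintic gKdV flexible estimate, then invoke the abstract induction — is the same as the paper's. The dimensional transfer, however, is handled in a fundamentally different way, and the decoupling you propose contains a genuine gap.

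The paper induces in $k$ first and in $d$ second: Proposition~\ref{prop:5kdv} plus Theorem~\ref{thm:ind_k} give the frequency-restricted estimate for every $k\ge 5$ in dimension one, and only then is Theorem~\ref{thm:ind_d} applied. Crucially, Theorem~\ref{thm:ind_d} does not produce a two-dimensional frequency-restricted estimate; its output is the multilinear estimate \eqref{eq:multi} directly, and its proof never decomposes the two-dimensional resonance level set in $\alpha$. Instead, one works in the duality form, writes $\xi=(\eta,\zeta)$ with $L(\xi)=L_0(\eta)+L_0(\zeta)$, freezes the $\zeta$-coordinates, and observes that the terms $L_0(\zeta_j)$ become constant modulation shifts that are absorbed by the $\sup_\alpha$ already present in \eqref{eq:fre_d}. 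The one-dimensional result is applied at fixed $\zeta$, and the $\zeta$-integration is then summed out by Cauchy--Schwarz against the weights $\jap{\zeta_j}^{-\Delta s}$ bought with the regularity surplus $s>s_c(k,2)>s_c(k,1)$. No covering of the level set by a grid of offsets appears anywhere.

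Your covering of $\{|\Omega-\alpha|<M\}$ by products $\{|\Omega^{(1)}(\xi_\bullet)-\beta|<M\}\times\{|\Omega^{(1)}(\eta_\bullet)-(\alpha-\beta)|<M\}$ on a $\beta$-grid of spacing $M$ produces a loss that is polynomial, not logarithmic, in $N_{\max}$. The effective range $|\beta|\lesssim N_{\max}^3$ contains about $N_{\max}^3/M$ grid points, and applying the 1D flexible estimate to each factor contributes $M^{1^-}\cdot M^{1^-}$ per summand, for a total of order $N_{\max}^3\,M^{1^-}$. Since Proposition~\ref{prop:5kdv} is only uniform in $\alpha$ — no decay in the offset is proved — there is no monotonicity to exploit, and a factor of $N_{\max}^3$ destroys the sharpness $s_{\text{LWP}}=s_c$ the theorem asserts (it cannot be traded for a small loss in $s$ within the claimed range). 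A secondary obstruction is that the weights $\jap{\xi}^{s+\epsilon}/\prod\jap{\xi_j}^{s}$ use two-dimensional Japanese brackets that do not factor over the two coordinates, so the Fubini separation you invoke already requires a nontrivial splitting of regularity between the two directions. Both difficulties are exactly what the paper's Theorem~\ref{thm:ind_d} is designed to sidestep by working at the level of the multilinear estimate rather than at the level of a two-dimensional flexible frequency-restricted estimate.
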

For dimensions $d\ge 3$ and $k\ge 5$, the local existence was shown for $s>s_c(k,d)$ by Linares and Ramos \cite{linaresramos_gzk}. For $d=3$ and $k\ge 3$, the optimal result was proved by Grünrock \cite{grunrock_cubiczk, grunrock_gzk}. For $d\ge 4$ and $k=3$, Kinoshita proved small data global well-posedness in $H^{s_c}(\R^d)$ \cite{kinoshita_3zk}. This was complemented by \cite{cos}, where the local well-posedness was obtained for $s>s_c(3,d)$ and a nonlinear smoothing effect of order $\epsilon<\epsilon_c(3,d)$ was shown to hold.

We adapt the proof of \cite{cos} to prove the flexible frequency-restricted estimate for \eqref{gzk} for $k_0=3$ and $d_0\ge 3$. As such, we are able to induce the result of \cite{cos} to larger values of $k$:

 \begin{thm}\label{thm:gzk}
	Fix $d\ge3$. Given $k\ge 3$, equation \eqref{gzk} is locally well-posed in $H^s(\R^d)$, for any $s>s_c(k,d)=\frac{d}{2}-\frac{2}{k-1}$. Moreover, the corresponding flow presents a nonlinear smoothing effect of order
	$$
	\epsilon<\epsilon_c(k,d,s)=\min\left\{(k-1)\left(s-\frac{d}{2}+\frac{2}{k-1}\right), 1\right\}.
	$$
\end{thm}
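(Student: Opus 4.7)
The plan is to reduce Theorem \ref{thm:gzk} to the verification of the \emph{flexible frequency-restricted estimate} (FFRE) at a single base pair, and then invoke the abstract induction machinery of the paper. For \eqref{gzk} one has $\ell=3$ (the order of the symbol $L(\xi)=\xi_1|\xi|^2$) and $n=1$ (the derivative loss of $\partial_x(v^k)$), so feeding the FFRE at $(k_0,d_0)=(3,3)$ into that machinery would automatically deliver, for every $k\ge 3$ and $d\ge 3$, local well-posedness in $H^s(\R^d)$ for $s>s_c(k,d)=\frac{d}{2}-\frac{2}{k-1}$ together with nonlinear smoothing of order
\[
\e<\min\Big\{(k-1)\Big(s-\tfrac{d}{2}+\tfrac{2}{k-1}\Big),\ \ell-n-1\Big\}=\min\Big\{(k-1)\Big(s-\tfrac{d}{2}+\tfrac{2}{k-1}\Big),\ 1\Big\},
\]
which matches the bound asserted in the theorem, thereby realizing both the folklore statement \eqref{eq:folklore} and the conjecture \eqref{eq:smoothing} for the gZK flow.

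The base case, being a cubic multilinear statement in dimension three, is amenable to the strategy of \cite{cos}. Writing $\xi_0=\xi_1+\xi_2+\xi_3$, the resonance function associated to \eqref{gzk} is
\[
\Omega(\xi_1,\xi_2,\xi_3)=\xi_{1,1}|\xi_1|^2+\xi_{2,1}|\xi_2|^2+\xi_{3,1}|\xi_3|^2-\xi_{0,1}|\xi_0|^2,
\]
and the task reduces to estimating the measure of the sliced level set $\{|\Omega-\alpha|<M\}\cap\{|\xi_j|\sim N_j\}$ across every dyadic configuration of $N_0,N_1,N_2,N_3$. I would proceed by the usual case split (high-high-high-high, high-high-low-high, high-low-low-high, and so on), and in each configuration integrate in the transverse direction along which $\nabla_\xi\Omega$ is most favorable. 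The fact that in dimension $d\ge 3$ there are at least two such transverse directions (orthogonal to $e_1$) is precisely what produces enough volume gain to saturate the critical threshold.

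The main obstacle is the upgrade from the rigid FRE of \cite{cos} to the \emph{flexible} version required by the induction theorem. The flexible formulation must be symmetric in the inputs and, crucially, must depend explicitly on every dyadic scale $N_j$ in a way that tolerates the insertion of an arbitrary extra low-frequency factor with no loss of derivatives — this is precisely the feature that allows the inductive step $k\mapsto k+1$ to attach a new low-frequency leg at no cost. Concretely, I would refine the case analysis of \cite{cos} so as to track the complete dyadic profile in each bound (rather than only the two largest frequencies), verify that the transverse Jacobian in the resonant configurations still produces the required power of the maximal scale, and match the resulting inequality against the precise hypotheses of the abstract statement. Once this refined FFRE is in hand, Theorem \ref{thm:gzk} follows directly from the induction theorem.
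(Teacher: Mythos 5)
Your proposal contains a genuine gap at the outset: you reduce the theorem to proving the flexible frequency-restricted estimate at the single base pair $(k_0,d_0)=(3,3)$ and then invoking both inductions, but the dimension induction (Theorem~\ref{thm:ind_d}) cannot be applied to \eqref{gzk} when $d\ge 3$. That theorem requires the symbol to decompose as $L(\xi)=\sum_j L_0(\xi^j)$, a sum of single-variable functions of the coordinates of $\xi$; for gZK the symbol is $L(\xi)=\xi^1|\xi|^2=\xi^1\sum_j(\xi^j)^2$, which contains cross terms $\xi^1(\xi^j)^2$ with $j\ge 2$ and therefore admits no such decomposition. The paper points this out explicitly (``For $d\ge 3$, there is no symmetrization that allows us to induce the result from the generalized KdV and we are forced to work directly in dimension $d$''). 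The symmetrization trick that converts $\partial_x\Delta$ into $\partial_x^3+\partial_y^3$ only works in $d=2$, which is why Theorem~\ref{thm:2dzk} is a corollary of the gKdV result but Theorem~\ref{thm:gzk} is not. The correct route, and the one the paper follows, is to prove the FFRE at $k_0=3$ directly \emph{in every dimension} $d\ge 3$, and then induct only in $k$ via Theorem~\ref{thm:ind_k}.

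Your discussion of the ``upgrade from rigid to flexible'' is also off target. The flexibility in the paper's definition does not consist of tracking the full dyadic profile so that a low-frequency factor can be attached at no cost; that absorption is handled inside the proof of Theorem~\ref{thm:ind_k} by transferring weight $d^+$ onto the dropped frequencies, not by the structure of the FFRE itself. The two genuine sources of flexibility that must be established at the base level are (i) uniformity in the deformation parameter $\sigma$ appearing in $\Gamma_\xi^\sigma$, which models the cumulative effect of the erased low frequencies on the convolution constraint and on the resonance $\Phi_{k_0}$, and (ii) the freedom to split the Sobolev weights into pairs $(s_j,s_j')$ with $s_j+s_j'=2s$ and $(\epsilon_0,\epsilon_0')$ with $\epsilon_0+\epsilon_0'=2\epsilon$, so that the two halves \eqref{eq:ffre1}--\eqref{eq:ffre2} need not be symmetric. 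When carrying out the cubic base case, the crucial technical point is then the exclusion of the degenerate configurations where all three frequencies are comparable and collinear; this is resolved not by a Jacobian count but by Remark~\ref{rem:avoid}, which shows that the problematic values $\sigma\simeq \pm 2\xi$ never arise in the inductive step. Your proposal does not identify this step, which is where the cubic gZK estimate of \cite{cos} actually needs to be modified.
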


\medskip

Our last application concerns the nonlinear Schrödinger equation in $\R^d$, $d\ge 2$,
\begin{equation}
	i\partial_tv +\Delta v = \pm|v|^{k-1}v,\qquad k\mbox{ odd}.
\end{equation}
The local well-posedness for $s>s_c(k,d)$ is standard (see, for example, \cite{cazweiss_criticalHs,pecher_nls,kato_nls}). Concerning the nonlinear smoothing property, we refer to \cite{bpss}, where the authors prove, for $s>\frac{d}{2}-\frac{1}{2(k-1)}$, a nonlinear smoothing effect of order $\epsilon=1^-$. This result was improved in \cite{cos} for $k=3, 5$, where nonlinear smoothing of order $\epsilon<\epsilon_c(k,d,s)$ was shown for all $s>s_c(k,d)$.

Here we will prove the flexible frequency-restricted estimate for \eqref{nls} when $k_0=3$ and $d_0\ge 2$. As a consequence, we derive

 \begin{thm}\label{thm:nls}
	Fix $d\ge2$. Given $k\ge 3$ odd, equation \eqref{nls} is locally well-posed in $H^s(\R^d)$, for any $s>s_c(k,d)=\frac{d}{2}-\frac{2}{k-1}$. Moreover, the corresponding flow presents a nonlinear smoothing effect of order
	$$
	\epsilon<\epsilon_c(k,d,s)=\min\left\{(k-1)\left(s-\frac{d}{2}+\frac{2}{k-1}\right), 1\right\}.
	$$
\end{thm}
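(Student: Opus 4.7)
The proof proceeds in two stages, matching the general induction framework of the paper. First, verify the flexible frequency-restricted estimate (FFRE) at the base case $k_0=3$ in every dimension $d_0\ge 2$. Second, apply the abstract induction theorem to lift this base case to every odd $k\ge 3$ and every $d\ge 2$, obtaining simultaneously the sharp local existence for $s>s_c(k,d)$ and the conjectured nonlinear smoothing $\epsilon<\epsilon_c(k,d,s)$.

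To set up the first stage, note that for odd $k$ the nonlinearity factorises as $|v|^{k-1}v=v^{(k+1)/2}\bar v^{(k-1)/2}$, a monomial of order $k$ in $(v,\bar v)$; the symbol $L(\xi)=|\xi|^2$ is homogeneous of order $\ell=2$ with no derivative loss, $n=0$, so indeed $s_c(k,d)=d/2-2/(k-1)$ and $\ell-n-1=1$, in agreement with the statement. The FFRE for $k=3$ amounts to trilinear bounds on frequency- and modulation-localised expressions governed by the cubic resonance function
\[
\Phi \;=\; \varepsilon_0|\xi_0|^2+\varepsilon_1|\xi_1|^2+\varepsilon_2|\xi_2|^2+\varepsilon_3|\xi_3|^2,\qquad \xi_0=\sigma_1\xi_1+\sigma_2\xi_2+\sigma_3\xi_3,
\]
with $\varepsilon_j,\sigma_j\in\{\pm 1\}$ determined by the placement of the conjugates in $v^2\bar v$. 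The required bound must carry an explicit gain in the modulation parameter $M$, \emph{uniformly} in the assignment of the largest among the four frequencies to an input or to the output. For $d\ge 2$, the cubic case was essentially established in \cite{cos}; I would revisit that analysis to extract the precise flexible form demanded by the induction machinery.

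The core obstacle is the high$\times$high$\times$high$\to$high configuration, where all four frequencies are comparable and a naive bilinear convolution bound on $\{|\Phi|\sim M\}$ does not deliver enough gain. Here one exploits that $d\ge 2$ renders $\Phi$ a nondegenerate quadratic form in several transverse directions. Foliating by $\{|\Phi|\sim M\}$ and freezing a variable along which $\Phi$ is monotone, one reduces the trilinear inequality to a bilinear $L^2$ restriction-type bound on a hypersurface of nonvanishing Gaussian curvature (a paraboloid or a hyperboloid, according to the conjugation pattern), which yields the sharp $M^{1/2}$ gain via the standard bilinear Strichartz/transversality toolbox. Combined with an elementary treatment of the low-high and mixed regimes, this establishes the FFRE at each $(3,d)$ with $d\ge 2$. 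Stage two is then a direct invocation of the abstract induction theorem, which automatically propagates the estimate from $k_0=3$ to every odd $k\ge 3$ while preserving the dimensional setting, yielding both the sharp local well-posedness for $s>s_c(k,d)$ and the nonlinear smoothing bound $\epsilon<\epsilon_c(k,d,s)$.
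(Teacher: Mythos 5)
There is a genuine gap, and it sits precisely at the point where you hand-wave. The object your stage one must produce is the flexible frequency-restricted estimate \eqref{eq:ffre1}--\eqref{eq:ffre2}: a \emph{level-set measure bound}, uniform in the deformation $\sigma$ and the shift $\alpha$, of the form
\begin{equation}
\sup_{\sigma,\alpha,\xi_{j\in A}}\int_{\Gamma_\xi^\sigma}\frac{|m_k|\,\jap{\xi}^{s_0+\epsilon_0}}{\prod_j\jap{\xi_j}^{s_j}}\,\mathbbm{1}_{|\Phi-\alpha|<M}\,d\xi_{j\in A^c}\lesssim M^{1^-}.
\end{equation}
This is not a trilinear $L^2$ inequality, and the ``bilinear Strichartz/transversality toolbox'' you invoke does not produce it: bilinear restriction gives bounds on products of free solutions, which via Cauchy--Schwarz control the multilinear estimate \eqref{eq:multi} directly, but bypass the FFRE entirely. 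Since Theorem \ref{thm:ind_k} takes the FFRE (for all $3$-descents of $\Phi$) as its hypothesis, not the trilinear estimate, your proposed stage two cannot be launched from your proposed stage one. What the paper actually does is elementary and tailored to the measure bound: fix the sup-variables, rescale $p_j=\xi_j/|\xi|$ so $\Phi=|\xi|^2 P$, then either change variables $p^1\mapsto P$ when $|\nabla P|\gtrsim 1$, or diagonalise the exact quadratic form $D^2P$ when near a nondegenerate critical point.

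A second, and more serious, omission is that your ``nonvanishing Gaussian curvature'' premise fails precisely in the hardest sign configurations. With conjugates placed so that the resonance is $\Phi=|\xi|^2+|\xi_1|^2+|\xi_2|^2-|\xi_3|^2$ (the $(+,+,-)$ case), holding $\xi,\xi_2$ fixed gives $D^2_{\xi_1}\Phi\equiv 0$; similarly $(-,-,-)$ gives $D^2_\xi\Phi\equiv 0$ when $\xi_1,\xi_3$ are held. There is no paraboloid or hyperboloid here --- the phase is flat in the integration variable, so your curvature-based bilinear estimate degenerates. The paper handles exactly these cases by permuting which input frequency is integrated, and, when that also fails (all frequencies comparable and equal up to sign), by invoking Remark \ref{rem:avoid}: the freedom to choose the splitting defining $\sigma$ forces the dangerous configuration into a region $\sigma\simeq\pm 2\xi$ which is provably avoidable. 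You do not use the $\sigma$-deformation at all, yet it is the raison d'\^etre of the ``flexible'' in FFRE and is indispensable for closing Cases 2 and 4. Finally, a small structural note: the paper does not prove the cubic FFRE in each $d\ge 2$ as you suggest; it proves only $d_0=2$ and obtains all higher $d$ from Theorem \ref{thm:ind_d}, which requires only $s_c(3,2)\ge 0$.
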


Comparing with the existing literature, we remark that there is a substantial gain when $k\ge 7$, regardless of the dimension.

\begin{nb}
	In the proof of Theorem \ref{thm:nls}, the exact number of conjugates in the nonlinear term is irrelevant. Furthermore, the condition that $k$ is odd is only used to ensure that $|v|^{k-1}v$ is indeed a polynomial in $v$ and $\bar{v}$. As such, the theorem holds verbatim for the equation
	$$
		i\partial_tv +\Delta v = v^{k'}\bar{v}^{k-k'},\quad 0\le k'\le k,
	$$
for any $k\ge 3$.
\end{nb}

\begin{nb}
	As a consequence of our induction results, one may consider the local well-posedness problem in $H^s$, $s\ge d/2$, for analytic nonlinearities of the form
	$$
	N[v]=\sum_{k\ge k_0}c_k N_k[v],\quad N_k\mbox{ of order }k,
	$$
	where $c_k$ decays faster than any exponential sequence. Indeed, as one can check in the proof of Theorem \ref{thm:ind_k} below, the bounds on the individual nonlinear terms grow as an exponential factor in $k$. In particular, one may easily obtain an \textit{a priori} bound on $N[v]$ for $s\ge d/2>s_c(k,d)$ (fo all $k$), and local well-posedness follows. Even further, one can derive a nonlinear smoothing result for any $s\ge d/2$ and $\epsilon<\min_{k\ge k_0} \epsilon_c(k,d,s)=\ell-n-1$.
\end{nb}

\begin{nb}\label{rem:L}
One may also consider the case where $L$ includes some lower-order terms. The induction theorems in either $k$ or $d$ still hold, as the proofs do not require $L$ to be homogeneous. The only caveat is that one should be able to ensure that, in the flexible frequency-restricted estimate, it is the higher-order terms which dictate the regularity threshold $s_c$.
\end{nb}

The above examples are a clear indication of the applicability of our abstract results. In our perspective, the induction results are an important step towards a unification of the local well-posedness theories for general semilinear dispersive equations.

\subsection{Statement of the abstract induction results}
Now that we covered the applications of our abstract results, we move to their precise formulation. As it is well-known (see, for example, \cite[Section 7.4]{linaresponce_book}), if one considers the Bourgain space $X^{s,b}$ associated with \eqref{eq:geral} (\cite{bourg1, bourg2}), defined by the norm\footnote{$\mathcal{F}_{t,x}$ denotes the space-time Fourier transform, while $\hat{\cdot}$ represents the spatial Fourier transform.}
$$
\|v\|_{X^{s,b}} = \|\jap{\tau-L(\xi)}^b\jap{\xi}^s\mathcal{F}_{t,x}v\|_{L^2_{\tau,\xi}},
$$
then the initial-value problem \eqref{eq:geral} is locally well-posed in $H^s(\R^d)$ if, for some $b>1/2$ and $b'>b-1$,
\begin{equation}\label{eq:N1}
	\|N[u]\|_{X^{s,b'}}\lesssim \|u\|_{X^{s,b}}^k
\end{equation}
and
\begin{equation}\label{eq:N2}
	\|N[u]-N[v]\|_{X^{s,b'}}\lesssim (\|u\|_{X^{s,b}}^{k-1} + \|u\|_{X^{s,b}}^{k-1})\|u-v\|_{X^{s,b}}.
\end{equation}
Moreover, if, for some $\epsilon>0$, 
\begin{equation}\label{eq:N3}
	\|N[u]\|_{X^{s+\epsilon,b'}}\lesssim \|u\|_{X^{s,b}}^k,
\end{equation} 
holds, then the flow generated by \eqref{eq:geral} presents a nonlinear smoothing effect of order $\epsilon$. 

Assume that the nonlinearity $N$ is of multilinear form, that is, that there exists a $k$-linear operator (which we also call $N$ for abuse of notation) such that, in spatial frequency space,
$$
(N[u_1,\dots,u_k])^\wedge(\xi) = \int_{\xi=\xi_1+\dots+\xi_k} m_k(\xi_1,\dots,\xi_k)\left(\prod_{j=1}^{k'} \hat{u}(\xi_j)\right)\left(\prod_{j=k'+1}^k \hat{\bar{u}}(\xi_j)\right)d\xi_1\dots d\xi_{k-1}
$$
and
$$
N[u,\dots, u] = N[u].
$$
Then \eqref{eq:N1}-\eqref{eq:N3} are an immediate consequence of the multilinear estimate
\begin{equation}\label{eq:multi}
	\|N[u_1,\dots, u_k]\|_{X^{s+\epsilon,b'}}\lesssim \prod_{j=1}^k \|u_j\|_{X^{s,b}}.
\end{equation}

In \cite{cos}, the derivation of \eqref{eq:multi} has been reduced even further. Define the convolution hyperplane with deformation $\sigma$,
$$
\Gamma_\xi^\sigma = \left\{ (\xi_1,\dots,\xi_k)\in (\R^{d})^k: \xi+\sigma=\sum_{j=1}^{k'}\xi_j -\sum_{j=k'+1}^k \xi_j , \quad \min_j|\xi_j|\gtrsim |\sigma| \right\},
$$
and the resonance function 
$$
\Phi=L(\xi)-\sum_{j=1}^{k'}L(\xi_j) + \sum_{j=k'+1}^kL(\xi_j).
$$
For convenience of notation, we set $\xi_0=\xi$, $\Gamma_\xi=\Gamma_\xi^0$, $\llbracket a,b \rrbracket=[a,b]\cap \mathbb{Z}$ and write ``$\xi_j, j\in A$'' as ``$\xi_{j\in A}$''.
 \begin{lem}\cite[Lemma 2]{cos}\label{lem:interpol}
	Suppose that there exist $\emptyset\neq A \subsetneq\llbracket0,k\rrbracket$ and $\mathcal{M}_j=\mathcal{M}_j(\xi,\xi_1,\dots,\xi_k)\ge 0$, $j=1,2$, such that
	$$
	\left(\mathcal{M}_1\mathcal{M}_2\right)^{\frac{1}{2}}=\frac{|m_k(\xi_1,\dots, \xi_k)|\jap{\xi}^{s+\epsilon}}{\prod_{j=1}^k\jap{\xi_j}^{s}}
	$$
	and, for any $M>1$, one has the frequency-restricted estimate
	\begin{equation}\label{eq:fre}
		\sup_{\xi_{j\in A},\alpha} \int_{\Gamma_\xi}\mathcal{M}_1\fia d\xi_{j\notin A} + 	\sup_{\xi_{j\notin A},\alpha} \int_{\Gamma_\xi}\mathcal{M}_2\fia d\xi_{j\in A}  \lesssim M^{1^-}.
	\end{equation}
	Then \eqref{eq:multi} holds for some $b>1/2$ and $b'>b-1$.
\end{lem}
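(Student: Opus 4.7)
The plan is to derive \eqref{eq:multi} from the frequency-restricted estimate \eqref{eq:fre} via duality, dyadic decomposition in modulation, and a carefully split Cauchy-Schwarz inequality.

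First, by duality, \eqref{eq:multi} is equivalent to bounding the $(k+1)$-linear form
$$
I:=\int_{\Gamma_\xi\times\{\sum\pm\tau_j=0\}}\prod_{j=0}^k f_j(\tau_j,\xi_j)\,(\mathcal{M}_1\mathcal{M}_2)^{1/2}\frac{\jap{\sigma_0}^{b'}}{\prod_{j=1}^k\jap{\sigma_j}^b}\,d\mu\lesssim\prod_{j=0}^k\|f_j\|_{L^2},
$$
where $\sigma_j:=\tau_j-L(\xi_j)$, $f_0\ge 0$ is an $L^2$ test function of unit norm, and $f_j:=\jap{\xi_j}^s\jap{\sigma_j}^b|\hat u_j|$ for $j\geq 1$; here the identity $(\mathcal{M}_1\mathcal{M}_2)^{1/2}=|m_k|\jap{\xi_0}^{s+\epsilon}/\prod\jap{\xi_j}^s$ from the hypothesis has been used to absorb the Sobolev weights. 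Now dyadically localize the modulations $|\sigma_j|\sim N_j$ for $j=0,\dots,k$ and set $M:=\max_j N_j$. The resonance identity $\Phi=\sigma_0-\sum_{j\le k'}\sigma_j+\sum_{j>k'}\sigma_j$ forces $|\Phi|\leq CM$ on this dyadic region, so that the cutoff $\fia$ appearing in \eqref{eq:fre}, with $\alpha=0$ and this $M$, is automatically identically equal to $1$ there.

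Next, split the integrand as the product of the two factors $\mathcal{M}_1^{1/2}\prod_{j\in A}f_j^{N_j}$ and $\mathcal{M}_2^{1/2}\prod_{j\notin A}f_j^{N_j}$, where $f_j^{N_j}:=f_j\,\mathbbm{1}_{|\sigma_j|\sim N_j}$, and apply Cauchy-Schwarz on the convolution hyperplane $\Gamma_\xi\times\{\sum\pm\tau_j=0\}$. Using Fubini in $\tau$ and standard Young-type bounds for the temporal convolution of profiles supported in intervals of sizes $\sim N_j$ (which produces the $L^2_\tau$ norms of the $f_j^{N_j}$ cleanly), the remaining spatial factors are precisely the two suprema in \eqref{eq:fre} with $\alpha=0$, each of which is $\lesssim M^{1^-}$ by hypothesis. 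Altogether,
$$
I_{N_0,\dots,N_k}\lesssim\frac{N_0^{b'}\,M^{1^-}}{\prod_{j=1}^k N_j^{b}}\prod_j\|f_j^{N_j}\|_{L^2}.
$$

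Finally, choose $b>1/2$ and $b'>b-1$ close enough to the critical values so that $b+|b'|<1$; the dyadic sum in $(N_0,\dots,N_k)$ then converges by a Schur-type argument, and together with the $\ell^2$-orthogonality $\sum_{N_j}\|f_j^{N_j}\|_{L^2}^2\leq\|f_j\|_{L^2}^2$ this yields $I\lesssim\prod_{j=0}^k\|f_j\|_{L^2}$, establishing \eqref{eq:multi}. The main obstacle is the clean decoupling of the time integration in the third step: the frequency-restricted estimate \eqref{eq:fre} is purely spatial, whereas $I$ mixes both spatial and temporal convolutions. The key observation is that fixing all modulation scales dyadically automatically confines $\Phi$ to an interval of size $O(M)$, which is exactly the localization captured by $\fia$ at $\alpha=0$; this lets Fubini separate the two integrations cleanly after the Cauchy-Schwarz splitting, with the cutoff supplying the bridge between the temporal weights and the hypothesis \eqref{eq:fre}. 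The remaining dyadic summation is routine.
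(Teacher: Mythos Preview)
The paper does not prove this lemma; it is quoted from \cite{cos} and used as a black box. Your sketch therefore cannot be compared to an in-paper proof, but it is worth assessing on its own merits since the argument is the engine behind everything that follows.

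The overall architecture---duality, modulation localisation, Cauchy--Schwarz split along $A$ versus $A^c$, then appeal to \eqref{eq:fre}---is correct and is indeed how the lemma is proved in \cite{cos}. There is, however, a genuine gap in your ``temporal decoupling'' step. With the splitting you describe, factor~1 contains $\mathcal{M}_1^{1/2}\prod_{j\in A}f_j^{N_j}$ and nothing else; after squaring, the integral over the space-time hyperplane still runs over $\tau_j$ for $j\notin A$, but no function in that factor depends on those $\tau_j$. The $\tau$-integration therefore diverges. The claim that ``Fubini separates the two integrations cleanly'' does not hold as written.

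The repair is to keep \emph{all} modulation indicators $\prod_{j}\mathbbm{1}_{|\sigma_j|\sim N_j}$ in \emph{both} Cauchy--Schwarz factors (indicators square to themselves). In factor~1 you then eliminate one pair $(\tau_{j_0},\xi_{j_0})$ with $j_0\in A^c$ via the convolution constraint. The indicator $\mathbbm{1}_{|\sigma_{j_0}|\sim N_{j_0}}$ becomes, after the change $\tau_j\mapsto\sigma_j$, exactly $\mathbbm{1}_{|\Phi-\alpha|\lesssim N_{j_0}}$ with $\alpha=\sum_{j\neq j_0}\pm\sigma_j$ depending on the remaining (fixed) modulation variables. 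This is precisely why the hypothesis carries a supremum over $\alpha$: it is \emph{not} true that $\alpha=0$ suffices, contrary to what you assert. The spatial integral is then bounded by \eqref{eq:fre} with $M$ replaced by $N_{j_0}$, the remaining $\tau$-integrals over $j\in A^c\setminus\{j_0\}$ contribute $\prod N_j$, and the bookkeeping of powers closes once $j_0$ (and the analogous $j_1\in A$ for factor~2) is chosen to carry the maximal modulation. Your final displayed bound on $I_{N_0,\dots,N_k}$ is also not quite right: extra half-powers $\prod_j N_j^{1/2}$ appear from the $\tau$-integration of the retained indicators, and these are what make the dyadic sum converge for $b=\tfrac12^+$, $b'=-\tfrac12^+$.
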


The above lemma reduces the problem of induction of local well-posedness and nonlinear smoothing in $k$ and $d$ to an induction on frequency-restricted estimates. As it turns out, it is not true that a frequency-restricted estimate for a nonlinearity of order $k_0$ implies a frequency-restricted estimate at order $k\ge k_0$. To be able to induce the estimates, we need to add some flexibility.

\medskip
Given $s>s_c(k,d)$ and $\epsilon<\epsilon_c(k,d,s)$, we say that a \textit{flexible frequency-restricted estimate} holds if there exist  $\emptyset\neq A \subsetneq \llbracket0,k\rrbracket$, pairs $(s_j,s_j')$, $j\in\llbracket0,k\rrbracket$ and $(\epsilon_0,\epsilon_0')$ such that
$$
\begin{cases}
	s_j+s_j'=2s, \quad j\in\llbracket0,k\rrbracket,\\
	\epsilon_0+\epsilon_0'=2\epsilon,
\end{cases}
$$
and, for any $M\ge 1$,
\begin{equation}\label{eq:ffre1}
	\sup_{\sigma, \alpha, \xi_{j \in A}} \int_{\Gamma_{\xi}^{ \sigma}}\frac{|m_k(\xi_1,\dots,\xi_k)| \langle \xi \rangle ^{s_0+\epsilon_0}}{\prod_{j=1}^{k} \langle \xi_j \rangle ^{s_j}} \mathbbm{1}_{|\Phi-\alpha| < M} d\xi_{j \in A^c} \lesssim M^{1^-}
\end{equation}
and
\begin{equation}\label{eq:ffre2}
	\sup_{\sigma, \alpha, \xi_{j \in A^c}} \int_{\Gamma_{\xi}^{\sigma}}\frac{|m_k(\xi_1,\dots,\xi_k)| \langle \xi \rangle ^{{s'_0}+{\epsilon'_0}}}{\prod_{j=1}^{k} \langle \xi_j \rangle ^{{s'_j}}} \mathbbm{1}_{|\Phi-\alpha| < M} d\xi_{j \in A} \lesssim M^{1^-}.
\end{equation}
Before we state the theorem, we need to be able to relate equations at different orders $k$. First, given $k_0\le k$, we suppose that
\begin{equation}\label{eq:ind_multiplier}
	|m_k(\xi_1,\dots, \xi_k)| \lesssim |m_{k_0}(\tilde{\xi}_1,\dots,\tilde{\xi}_{k_0})|
\end{equation}
where $(\tilde{\xi}_1,\dots,\tilde{\xi}_{k_0})$ is the vector consisting in the $k_0$ largest (up to multiplicative constants) components of $(\xi_1,\dots, \xi_k)$.

Finally, given $k_0\le k$, we say that $\Phi_{k_0}:(\R^d)^{k_0+1}\to \R$ is a $k_0$-descent of $\Phi$ if there exists a set $J\subset\llbracket 1, k \rrbracket$, $|J|=k-k_0$, such that
$$
\Phi_{k_0}\cong\Phi\Big|_{H_J},\quad \mbox{where }H_J=\{(\xi, \xi_1,\dots,\xi_k)\in (\R^d)^{k+1}: \xi_j=0, \forall j\in J\}.
$$

We observe that $\Phi_{k_0}$ is a resonance function for a nonlinearity of order $k_0$ (which is derived from the nonlinearity of order $k$ by ``erasing'' $k-k_0$ factors).
%In other words, $\Phi_{k_0}$ is obtained from $\Phi$ by setting $k-k_0$ entries equal to zero. 

\begin{nb}
	If no complex conjugates are present in $N$, the only $k_0$-descent of $\Phi$ is
$$
\Phi_{k_0}=L(\xi)-\sum_{j=1}^{k_0}L(\xi_j).
$$
However, under the presence of conjugates (as for the \eqref{nls}), it becomes important which $\xi_j$'s are being taken equal to 0, as that influences the signs on the $k_0$-descent of $\Phi$.
\end{nb}

\begin{thm}[Induction in $k$]\label{thm:ind_k}
Fix $k\ge 3$. Suppose that there exists $k_0< k$ such that, for all $s>s_c(k_0,d)$, $\epsilon<\epsilon_c(k_0,d)$ and for all $k_0$-descents of $\Phi$, a flexible frequency-restricted estimate holds. 	Then, given $s>s_c(k,d)$ and $\epsilon<\epsilon_c(k,d,s)$, the conditions of Lemma \ref{lem:interpol} are satisfied.

In particular, \eqref{eq:geral} is locally well-posed in $H^s(\R^d)$, $s>s_c(k,d)$, and the correspondng flow presents a nonlinear smoothing effect of order $\epsilon<\epsilon_c(k,d,s)$.
\end{thm}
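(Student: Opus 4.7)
The plan is to prove the flexible frequency-restricted estimate at order $k$ for every admissible pair $(s,\epsilon)$ with $s>s_c(k,d)$ and $\epsilon<\epsilon_c(k,d,s)$; once this is in hand, Lemma~\ref{lem:interpol} immediately yields the multilinear estimate~\eqref{eq:multi}, hence local well-posedness and nonlinear smoothing. A useful preliminary observation is that $s_c(k,d)=\frac{d}{2}-\frac{\ell-n}{k-1}$ is increasing in $k$, so every admissible $s$ at order $k$ is automatically admissible at order $k_0$, and the assumed $k_0$-estimate is available at the \emph{same} base regularity $s$, with a positive gap $s-s_c(k_0,d)$ of room to manoeuvre.

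My first move is to use the symmetry of $m_k$ in its non-conjugated and conjugated blocks to restrict the integration in \eqref{eq:ffre1}--\eqref{eq:ffre2} to the sector $|\xi_1|\ge\cdots\ge|\xi_k|$, and then to dyadically localise the $k-k_0$ smallest frequencies $|\xi_j|\sim N_j$, $j>k_0$. On this sector, \eqref{eq:ind_multiplier} gives $|m_k(\xi_1,\dots,\xi_k)|\lesssim|m_{k_0}(\xi_1,\dots,\xi_{k_0})|$; the convolution constraint in $\Gamma_\xi^\sigma$ rewrites as $\xi+\tilde\sigma=\sum_{j=1}^{k_0}\pm\xi_j$ with a deformation $\tilde\sigma=\sigma-\sum_{j>k_0}\pm\xi_j$ that depends only on the frozen small frequencies; and the resonance decomposes as $\Phi=\Phi_{k_0}(\xi,\xi_1,\dots,\xi_{k_0})-\sum_{j>k_0}\pm L(\xi_j)$, where $\Phi_{k_0}$ is the $k_0$-descent corresponding to $J=\{k_0+1,\dots,k\}$. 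Once the small frequencies are fixed, the second sum is a constant that is absorbed into a shifted modulation threshold $\alpha'$, so the inner integral over the large block is exactly a flexible FRE at order $k_0$ for the descent $\Phi_{k_0}$, which is covered by the hypothesis.

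It then remains to construct the order-$k$ parameters $(s_j,s_j',\epsilon_0,\epsilon_0')$: on the large block $\{0,1,\dots,k_0\}$ I would inherit the parameters from the $k_0$-hypothesis, while on the small block $\{k_0+1,\dots,k\}$ I would distribute the indices symmetrically between $A$ and $A^c$ and choose the weights $(s_j,s_j')$, subject to $s_j+s_j'=2s$, so that the dyadic sums produced by integrating out the small frequencies are controlled by the remaining high-frequency scales, which can then be absorbed into the weights on the large block before applying the $k_0$-estimate. Each application delivers $M^{1^-}$, and summing the geometric tails across dyadic scales recovers the total bound.

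The main obstacle is the exponent bookkeeping: both estimates~\eqref{eq:ffre1} and~\eqref{eq:ffre2} must close simultaneously under a \emph{single} choice of $(s_j,s_j',\epsilon_0,\epsilon_0')$, using the budget $(k-1)(s-s_c(k,d))$ supplied by the subcritical gap to pay for both the loss from integrating out small frequencies on each side and the smoothing exponent $\epsilon$. The cap $\ell-n-1$ in $\epsilon_c(k,d,s)$ reflects a separate bottleneck from the high-low-$\cdots$-low-to-high regime, which is already present at order $k_0$ and simply propagates to order $k$. A further subtlety is that in the presence of conjugates the sign pattern of $\Phi_{k_0}$ depends on the choice of $J$, which is precisely why the hypothesis is formulated for \emph{all} $k_0$-descents of $\Phi$.
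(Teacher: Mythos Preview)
Your high-level strategy matches the paper's: order the frequencies so that $|\xi_1|\gtrsim\cdots\gtrsim|\xi_k|$, use \eqref{eq:ind_multiplier} to replace $m_k$ by $m_{k_0}$, treat the $k-k_0$ smallest frequencies as producing a deformation $\tilde\sigma$ and a shift in $\alpha$, and reduce the inner integral to a flexible FRE at order $k_0$ for the appropriate descent. The observation about conjugates forcing one to cover all $k_0$-descents is also exactly the paper's Remark~\ref{rem:conjugate}.

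Where your sketch diverges from the paper is in the handling of the small-frequency block, and the paper's choice is both simpler and worth knowing. You propose to distribute the small indices symmetrically between $A$ and $A^c$ and to sum dyadic tails on both sides. The paper instead places \emph{all} of $\llbracket k_0+1,k\rrbracket$ into $B^c$ (so $B=A$, the set coming from the $k_0$-hypothesis). This asymmetry makes $\mathcal M_1$ trivial: the small frequencies carry weight zero there, they are fixed in the supremum, and the integral over $B=A$ is literally the first $k_0$-FRE with a shifted $\tilde\alpha$ and deformation $\sigma=-\sum_{j>k_0}\xi_j$. All the work is pushed onto $\mathcal M_2$, where the small frequencies carry weight $2s$ and must be integrated.

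The second point is the precise mechanism for ``absorbing into the weights on the large block'', which you leave vague. The paper does \emph{not} apply the $k_0$-FRE at the same regularity $s$; it applies it at $\tilde s=s-\Delta s$ with
\[
\Delta s=\frac{(k-k_0)(d^+-2s)}{2(k_0-1)}\quad\text{if }s\le d/2,\qquad \Delta s=0\quad\text{if }s>d/2,
\]
and checks directly that $\tilde s>s_c(k_0,d)$ and $\epsilon<\epsilon_c(k_0,d,\tilde s)$. The extra weight $\jap{\xi_j}^{-2\Delta s}$ on the large block (for $j=2,\dots,k_0$) is then transferred, using the ordering $|\xi_j|\ge|\xi_{j'}|$, onto the small frequencies to upgrade their weight from $2s$ to $d^+$, which is exactly what makes $\int\jap{\xi_j}^{-d^+}d\xi_j$ finite. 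No dyadic localisation is needed; the weight transfer is a one-line pointwise inequality.

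Your symmetric scheme would also close, but it forces you to run this absorption argument on \emph{both} sides simultaneously, doubling the bookkeeping for no gain. One minor omission in your sketch: you should verify that $|\tilde\sigma|\lesssim\min_{j\le k_0}|\xi_j|$, so that $(\xi_1,\dots,\xi_{k_0})$ genuinely lies in $\Gamma_\xi^{\tilde\sigma}$; this follows from the ordering and the constraint $|\sigma|\lesssim\min_j|\xi_j|$, as the paper notes.
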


Our last result concerns the induction to dimension $d$ of a given frequency-restricted estimate in dimension $d_0\le d$. To that end, we must impose that the linear operator $L$ can be decomposed as\footnote{Otherwise, it would become unclear how one relates an equation in $d$ dimensions with one in lower dimension.}
\begin{equation}\label{eq:Ld}
	L(\xi)=\sum_{j=1}^d L_0(\xi^j),\quad \xi=(\xi^1,\dots,\xi^d).
\end{equation}
For $k$ fixed, let $\Phi_0$ be the resonance function associated with the linear operator in $d_0$ dimensions. Moreover, we suppose that the multiplier in the nonlinear term depends only on $\xi$, $m=m(\xi)$, and that there exists a multiplier $m_0:\R^{d_0} \to \R$ such that $|m(\xi)|\lesssim |m_0(\eta)|$, where $\eta\in \R^{d_0}$ is the vector with the largest $d_0$ components of $\xi$.
\begin{thm}[Induction in $d$]\label{thm:ind_d} Suppose that there exist $\emptyset\neq A \subsetneq\llbracket0,k\rrbracket$ and positive multipliers $\mathcal{M}_j:(\R^{d_0})^{k+1}\to \R^+$, $j=1,2$ such that
	$$
	\left(\mathcal{M}_1\mathcal{M}_2\right)^{\frac{1}{2}}=\frac{|m_0(\xi )|\jap{\xi}^{s+\epsilon}}{\prod_{j=1}^k\jap{\xi_j}^{s}}
	$$
	and, for any $s>s_c(k,d_0)$, $\epsilon<\epsilon_c(k,d_0)$ and $M>1$,
	\begin{equation}\label{eq:fre_d}
		\sup_{\xi_{j\in A},\alpha} \int_{\Gamma_\xi}\mathcal{M}_1\mathbbm{1}_{|\Phi_0-\alpha|<M} d\xi_{j\notin A} + 	\sup_{\xi_{j\notin A},\alpha} \int_{\Gamma_\xi}\mathcal{M}_2\mathbbm{1}_{|\Phi_0-\alpha|<M} d\xi_{j\in A}  \lesssim M^{1^-}.
	\end{equation}
Furthermore, suppose that $s_c(k,d_0)\ge0$. Then, for any $d\ge d_0$, $s>s_c(k,d)$ and $\epsilon<\epsilon_c(k,d)$, \eqref{eq:multi} holds for some $b>1/2$ and $b'>b-1$.

In particular, \eqref{eq:geral} is locally well-posed in $H^s(\R^d)$, $s>s_c(k,d)$, and the corresponding flow presents a nonlinear smoothing effect of order $\epsilon<\epsilon_c(k,d,s)$. 
\end{thm}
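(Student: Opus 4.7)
The strategy is to apply Lemma~\ref{lem:interpol} in dimension $d$ by constructing multipliers $\mathcal{M}_1^{(d)}, \mathcal{M}_2^{(d)}:(\R^d)^{k+1}\to\R^+$ that satisfy the product identity and the $d$-dimensional frequency-restricted estimate. First I would match regularities by setting $s':=s-(d-d_0)/2$; since $s_c(k,d)=s_c(k,d_0)+(d-d_0)/2$ and the difference $s-s_c$ is preserved under this shift, one has $s'>s_c(k,d_0)$ and $\epsilon<\epsilon_c(k,d_0,s')$. The hypothesis then supplies multipliers $\mathcal{M}_1,\mathcal{M}_2$ on $(\R^{d_0})^{k+1}$ at the regularity pair $(s',\epsilon)$.

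Next I would perform a finite frequency partition. Up to a permutation of the $d$ coordinates, which leaves both $L$ and the bound on $m$ invariant, the $d_0$ coordinates of $\xi$ of largest absolute value may be assumed to be the first $d_0$. Further subdividing according to which $d_0$-subset is dominant for each $\xi_j$ produces $\binom{d}{d_0}^k$ subregions that are handled identically. On each subregion, write $\xi=(\eta,\zeta)$ and $\xi_j=(\eta_j,\zeta_j)$ with $\eta,\eta_j\in\R^{d_0}$ the (re-ordered) dominant blocks; then $\jap{\xi}\sim\jap{\eta}$, $\jap{\xi_j}\sim\jap{\eta_j}$ and, by the hypothesis on the multiplier, $|m(\xi)|\lesssim|m_0(\eta)|$. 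The product structure $L(\xi)=\sum_i L_0(\xi^i)$ decomposes the resonance as $\Phi=\Phi_0(\eta,\eta_1,\dots,\eta_k)+\widetilde{\Phi}(\zeta,\zeta_1,\dots,\zeta_k)$, with $\Phi_0$ matching the $d_0$-dimensional resonance from the hypothesis; the convolution hyperplane factors analogously into an $\eta$-part and a $\zeta$-part.

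I would then define $\mathcal{M}_j^{(d)}(\xi,\xi_1,\dots,\xi_k):=\mathcal{M}_j(\eta,\eta_1,\dots,\eta_k)\cdot W_j(\xi,\xi_1,\dots,\xi_k)$, choosing weights $W_j$ so that (i) $(\mathcal{M}_1^{(d)}\mathcal{M}_2^{(d)})^{1/2}$ coincides with $\frac{|m(\xi)|\jap{\xi}^{s+\epsilon}}{\prod_j\jap{\xi_j}^s}$ on the reduced region, and (ii) enough $\zeta$-decay is present to control the $\zeta$-integration. The positivity hypothesis $s_c(k,d_0)\geq0$ ensures $s,s'\geq0$ and allows the interpolation of $\jap{\xi_j}^s$ into genuine $\eta$- and $\zeta$-components. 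To verify the $d$-dimensional frequency-restricted estimate, I would integrate in stages: for fixed $\zeta$-variables, the substitution $\alpha\mapsto\alpha-\widetilde{\Phi}$ reduces the $\eta_{j\notin A}$-integration to the hypothesized $d_0$-estimate, giving $\lesssim M^{1^-}$ uniformly; the remaining $\zeta_{j\notin A}$-integration is then controlled by $W_j$ together with the $\zeta$-contribution of the indicator $\mathbbm{1}_{|\Phi-\alpha|<M}$. Lemma~\ref{lem:interpol} in dimension $d$ then yields \eqref{eq:multi}, and local well-posedness and nonlinear smoothing follow by the standard arguments.

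The main obstacle is the careful design of the weights $W_j$: a purely decay-based treatment of the $\zeta$-integration is insufficient because the regularity surplus $s-s'=(d-d_0)/2$ must be efficiently distributed among the $\zeta$-directions and the integrated variables, while also exploiting the indicator simultaneously in both the $\eta$- and $\zeta$-components of the resonance. The assumption $s_c(k,d_0)\geq0$ is precisely what keeps all relevant exponents non-negative and makes this allocation work; additional bookkeeping is required to check that the argument remains uniform across the $\binom{d}{d_0}^k$ subregions where the dominant coordinates of the $\xi_j$'s need not coincide with those of $\xi$.
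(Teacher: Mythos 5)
Your plan is a different route from the paper's, and it has a genuine unresolved gap. You aim to manufacture a $d$-dimensional frequency-restricted estimate — building multipliers $\mathcal{M}_j^{(d)} = \mathcal{M}_j \cdot W_j$ and then invoking Lemma~\ref{lem:interpol} at dimension $d$ — but, as you yourself flag, you never construct the weights $W_j$, and their construction is precisely where the technical work lies. There is also a concrete exponent issue: with $s' = s - (d-d_0)/2$ exactly, the leftover transverse decay is $\prod_{j\ge 2}\jap{\zeta_j}^{-(d-d_0)}$, which is \emph{not} integrable over $\zeta_j\in\R^{d-d_0}$; one needs $\Delta s = ((d-d_0)/2)^+$ strictly larger, and the room to push $\Delta s$ above $(d-d_0)/2$ is exactly the slack $s - s_c(k,d) > 0$. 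Finally, you remark that one must ``exploit the indicator simultaneously in both the $\eta$- and $\zeta$-components of the resonance''; this is a red flag that your scheme is straining, because the paper's argument never needs to use the indicator in the $\zeta$-direction at all.

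The paper sidesteps the entire $W_j$ problem by \emph{not} constructing a $d$-dimensional frequency-restricted estimate. Instead it goes back to the dual form of \eqref{eq:multi}: writing $\xi=(\eta,\zeta)$ with $|\eta|\sim|\xi|$, setting $\tilde s = s - ((d-d_0)/2)^+$, and using $\tilde s>0$ (this is where $s_c(k,d_0)\ge 0$ enters) to peel off a transverse factor $\prod_{j\ge 2}\jap{\zeta_j}^{-\Delta s}$, it reduces the dual integral to a $\zeta$-integral of $\eta$-slices $I(\zeta,\zeta_1,\dots,\zeta_k)$. Each slice is bounded, \emph{uniformly in the $\zeta$-variables}, by running the argument of \cite[Lemma 2]{cos} in $d_0$ dimensions against the hypothesized $d_0$-dimensional frequency-restricted estimate (after shifting $\tau$'s by the constant $L^{(d-d_0)}(\zeta_j)$, which is where the decomposition $L(\xi)=\sum_i L_0(\xi^i)$ is used). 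The remaining $\zeta$-integral is closed by an elementary Cauchy--Schwarz over $\Gamma_\zeta$, using only the summability of $\jap{\zeta_j}^{-2\Delta s}$ and no resonance information in $\zeta$. So the two approaches diverge at the outset: yours tries to re-prove the input hypothesis (an FRE) in dimension $d$, while the paper proves the \emph{output} (the multilinear estimate) in dimension $d$ directly, using the $d_0$-dimensional FRE only slice-by-slice. Until you actually exhibit $W_j$ satisfying both the exact product identity required by Lemma~\ref{lem:interpol} and the needed integrability, your proposal is a sketch, not a proof.
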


\bigskip

This article is organized as follows. In Section \ref{sec:abstra}, we prove Theorems \ref{thm:ind_k} and \ref{thm:ind_d}. Once we have these results in hand, we apply them to the generalized KdV (Section \ref{sec:kdv}), generalized Zakharov-Kuznetsov (Section \ref{sec:zk}) and nonlinear Schrödinger equations (Section \ref{sec:nls}).

\section{Proof of the induction results}\label{sec:abstra}

\begin{proof}[Proof of Theorem \ref{thm:ind_k}]
To simplify the exposition, we assume that there are no complex conjugates present in the nonlinearity (for the general case, see Remark \ref{rem:conjugate} below). Without loss of generality, we order the frequencies
\begin{equation}\label{eq:order}
	|\xi_1|\gtrsim \dots \gtrsim |\xi_k|,
\end{equation}
which implies in particular that $|\xi_1|\gtrsim |\xi|$. 

Given $s>s_c(k,d)$ and $\epsilon<\epsilon_c(k,d,s)$, define\footnote{The notation $d^+$ represents a number slightly larger than $d$.}
\begin{equation}
	\tilde{s}=s-\Delta s,\quad \Delta s=\begin{cases}
\frac{(k-k_0)(d^+-2s)}{2(k_0-1)},& s\le d/2\\ 0 & s>d/2
	\end{cases}.
\end{equation}
Since $s>s_c(k,d)$, a simple computation shows that $\tilde{s}>s_c(k_0,d)$ and $\epsilon<\epsilon_c(k_0,d,\tilde{s})$. By assumption, there exist  $\emptyset\neq A \subsetneq \llbracket0,k\rrbracket$, pairs $(s_j,s_j')$, $j\in\llbracket0,k_0\rrbracket$ and $(\epsilon_0,\epsilon_0')$ such that
$$
\begin{cases}
	s_j+s_j'=2\tilde{s}, \quad j\in\llbracket0,k_0\rrbracket\\
	\epsilon_0+\epsilon_0'=2\epsilon
\end{cases}
$$
and
\begin{equation}
	\sup_{\sigma, \alpha, \xi_{j \in A}} \int_{\Gamma_{\xi}^{ \sigma}}\frac{|m_{k_0}| \langle \xi \rangle ^{s_0+\epsilon_0}}{\prod_{j=1}^{k_0} \langle \xi_j \rangle ^{s_j}} \mathbbm{1}_{|\Phi_{k_0}-\alpha| < M} d\xi_{j \in A^c} + 	\sup_{\sigma, \alpha, \xi_{j \in A^c}} \int_{\Gamma_{\xi}^{\sigma}}\frac{|m_{k_0}| \langle \xi \rangle ^{{s'_0}+{\epsilon'_0}}}{\prod_{j=1}^{k_0} \langle \xi_j \rangle ^{{s'_j}}} \mathbbm{1}_{|\Phi_{k_0}-\alpha| < M} d\xi_{j \in A} \lesssim M^{1^-}.
\end{equation}
We now prove that the conditions of Lemma \ref{lem:interpol} hold true. Set $B=A$, $B^c=A^c\cup \llbracket k_0+1, k\rrbracket$,
$$
\mathcal{M}_1=\frac{|m_k(\xi_1,\dots, \xi_k)| \langle \xi \rangle ^{s_0+\epsilon_0}}{\prod_{j=1}^{k_0} \langle \xi_j \rangle ^{s_j}}
$$
and
$$
\mathcal{M}_2=\frac{|m_k(\xi_1,\dots,\xi_k)| \langle \xi \rangle ^{s'_0+2\Delta s+\epsilon'_0}}{\prod_{j=1}^{k_0} \jap{\xi_j}^{s'_j+2\Delta s} \cdot \prod_{j=k_0+1}^k \jap{\xi_j}^{2s}}.
$$
The choice of weights ensures that
$$
(\mathcal{M}_1\mathcal{M}_2)^{\frac{1}{2}}=\frac{|m_k(\xi_1,\dots,\xi_k)| \langle \xi \rangle ^{s+\epsilon}}{\prod_{j=1}^{k} \langle \xi_j \rangle ^{s}}.
$$
Observe that, as the frequencies are ordered,
$$
\sigma:=-\sum_{j=k_0+1}^k \xi_j \quad \mbox{satisfies}\quad |\sigma|\lesssim \min_{1\le j\le k_0} \{|\xi_j|\}.
$$
In particular, if $(\xi_1,\dots,\xi_k)\in \Gamma_\xi$, then $(\xi_1,\dots, \xi_{k_0})\in \Gamma_\xi^\sigma$.
Therefore
\begin{align*}
\sup_{\alpha, \xi_{j \in B^c}} \int_{\Gamma_{\xi}}\mathcal{M}_1 \mathbbm{1}_{|\Phi-\alpha| < M} d\xi_{j \in B}=	&\sup_{\alpha, \xi_{j \in B^c}} \int_{\Gamma_{\xi}}\frac{|m_k(\xi_1,\dots,\xi_k)| \langle \xi \rangle ^{s_0+\epsilon_0}}{\prod_{j=1}^{k_0} \langle \xi_j \rangle ^{s_j}} \mathbbm{1}_{|\Phi-\alpha| < M} d\xi_{j \in B} \\\lesssim &
	\sup_{\alpha, \xi_{j \in B^c}} \int_{\Gamma_{\xi}}\frac{|m_{k_0}(\xi_1,\dots,\xi_{k_0})| \langle \xi \rangle ^{s_0+\epsilon_0}}{\prod_{j=1}^{k_0} \langle \xi_j \rangle ^{s_j}} \mathbbm{1}_{|\Phi_{k_0}-\sum_{j=k_0+1}^kL(\xi_j)-\alpha| < M} d\xi_{j \in B}  \\\lesssim & 	\sup_{\sigma, \tilde{\alpha}, \xi_{j \in A^c}} \int_{\Gamma_{\xi}^\sigma}\frac{|m_{k_0}(\xi_1,\dots,\xi_{k_0})| \langle \xi \rangle ^{s_0+\epsilon_0}}{\prod_{j=1}^{k_0} \langle \xi_j \rangle ^{s_j}} \mathbbm{1}_{|\Phi_{k_0}-\tilde{\alpha}| < M} d\xi_{j \in A}  \lesssim M^{1^-}.
\end{align*}
We are left with the frequency-restricted estimate for $\mathcal{M}_2$. If $s\le d/2$,
\begin{align*}
\mathcal{M}_2=\frac{|m_{k}(\xi_1,\dots,\xi_{k})|\langle \xi \rangle ^{s'_0+2\Delta s+\epsilon'_0}}{\prod_{j=1}^{k_0} \jap{\xi_j}^{s'_j+2\Delta s} \cdot \prod_{j=k_0+1}^k \jap{\xi_j}^{2s}} &\lesssim \frac{|m_{k_0}(\xi_1,\dots,\xi_{k_0})|\langle \xi \rangle ^{s'_0 + \varepsilon'_0}}{\langle \xi_1 \rangle ^{s'_1} \prod_{j=2}^{k_0} \langle \xi_j \rangle ^{s_j+\frac{(k-k_0)(d^+-2s)}{k_0-1}} \prod_{j=k_0+1}^{k} \langle \xi_{j} \rangle^{2s}} \\&\lesssim \frac{|m_{k_0}(\xi_1,\dots,\xi_{k_0})| \langle \xi \rangle ^{s'_0 + \varepsilon'_0}}{\prod_{j=1}^{k_0} \langle \xi_j \rangle ^{s'_j}} \prod_{j=k_0+1}^{k} \frac{1}{\langle \xi_{j} \rangle^{d^+}},
\end{align*}
where we moved the extra weights from the larger frequencies to the lower ones. If $s> d/2$, we have directly
\begin{align*}
	\mathcal{M}_2=\frac{|m_{k}(\xi_1,\dots,\xi_{k})| \langle \xi \rangle ^{s'_0+2\Delta s+\epsilon'_0}}{\prod_{j=1}^{k_0} \jap{\xi_j}^{s'_j+2\Delta s} \cdot \prod_{j=k_0+1}^k \jap{\xi_j}^{2s}} &\lesssim \frac{|m_{k_0}(\xi_1,\dots,\xi_{k_0})| \langle \xi \rangle ^{s'_0 + \varepsilon'_0}}{\prod_{j=1}^{k_0} \langle \xi_j \rangle ^{s'_j}} \prod_{j=k_0+1}^{k} \frac{1}{\langle \xi_{j} \rangle^{d^+}}.
\end{align*}
In any case, we can estimate
\begin{align*}
&\sup_{\alpha, \xi_{j \in B}} \int_{\Gamma_{\xi}}\mathcal{M}_2 \mathbbm{1}_{|\Phi-\alpha| < M} d\xi_{j \in B^c}\\\lesssim 	&\sup_{\alpha, \xi_{j \in B}} \int_{\Gamma_{\xi}}\frac{|m_{k_0}(\xi_1,\dots,\xi_{k_0})| \langle \xi \rangle ^{s'_0 + \varepsilon'_0}}{\prod_{j=1}^{k_0} \langle \xi_j \rangle ^{s'_j}}\prod_{j=k_0+1}^{k} \frac{1}{\langle \xi_{j} \rangle^{d^+}} \mathbbm{1}_{|\Phi-\alpha| < M} d\xi_{j \in B^c} \\\lesssim &
\sup_{\alpha, \xi_{j \in B}} \int \left(\int_{\Gamma_{\xi}^\sigma}\frac{|m_{k_0}| \langle \xi \rangle ^{s'_0 + \varepsilon'_0}}{\prod_{j=1}^{k_0} \langle \xi_j \rangle ^{s'_j}} \ \mathbbm{1}_{|\Phi_{k_0}-\sum_{j=k_0+1}^kL(\xi_j)-\alpha| < M} d\xi_{j \in A^c}\right) \prod_{j=k_0+1}^{k} \frac{1}{\langle \xi_{j} \rangle^{d^+}} d\xi_{k_0+1}\dots d\xi_k
\\\lesssim &
\left(\sup_{\sigma, \tilde{\alpha}, \xi_{j \in B}}  \int_{\Gamma_{\xi}^\sigma}\frac{|m_{k_0}| \langle \xi \rangle ^{s'_0 + \varepsilon'_0}}{\prod_{j=1}^{k_0} \langle \xi_j \rangle ^{s'_j}} \ \mathbbm{1}_{|\Phi_{k_0}-\tilde{\alpha}| < M} d\xi_{j \in A^c}\right) \prod_{j=k_0+1}^{k} \int  \frac{1}{\langle \xi_{j} \rangle^{d^+}} d\xi_{k_0+1}\dots d\xi_k\\\lesssim &\ M^{1^-}.
\end{align*}
Therefore \eqref{eq:fre} holds and the result follows by Lemma \ref{lem:interpol}.
\end{proof}

\begin{nb}\label{rem:conjugate}
	In the presence of complex conjugates, the proof follows exactly the same steps. The only observation is that there is no way of ensuring whether the $k_0$ largest frequencies correspond to $u$ or $\bar{u}$. As such, we must consider all possible combinations of signs in $\Phi_{k_0}$. This is why we suppose that the flexible frequency-restricted estimate must hold for all possible $k_0$-descents of $\Phi$. 
\end{nb}

\begin{nb}[Avoidable values of $\sigma$]\label{rem:avoid}
Let $c\in \R$ such that
\begin{equation}\label{eq:avoid}
	c\neq 1-\frac{k_0}{K},\quad k_0\le K\le k.
\end{equation}
Then we claim that  Theorem \ref{thm:ind_k} still holds if, in the definition of flexible frequency-restricted estimate, we may replace $\Gamma_\xi^\sigma$ with
$$
\Gamma_\xi^\sigma(c)=\Gamma_\xi^\sigma\cap \{(\xi_1,\dots,\xi_{k_0})\in (\R^d)^{k_0}: |\xi_1|\sim \dots \sim |\xi_{k_0}| \Rightarrow \sigma \not\simeq c\xi\}.
$$
To that end, it suffices to check that, if $|\xi_1|\sim \dots \sim |\xi_{k_0}|$ in the above proof, we can ensure that
$$
\sigma:= \sum_{j=k_0+1}^k \xi_j\quad \mbox{satisfies}\quad \sigma\not\simeq c\xi.
$$
Suppose that we are in a region where
$$|\xi_1|\sim \dots \sim |\xi_{k_0}|\quad \mbox{and}\quad \sigma\simeq c\xi
$$
Let $K$ be the number of frequencies which are comparable with $|\xi_1|$ (which must be between $k_0$ and $k$). If $K=k_0$, then $\sigma\simeq 0$ contradictng $\sigma\simeq c\xi$. If there are two frequencies, $\xi_j$ and $\xi_{j'}$, $j\le k_0<j'\le K$ with $\xi_j\not\simeq \xi_{j'}$, the permutation $j\leftrightarrow j'$ would then guarantee that $\sigma\not\simeq c\xi$ (and since these two frequencies are comparable, \eqref{eq:order} still holds).

As such, the only problematic case is when $\xi_j\simeq \xi_1$ for all $j\le K$, which we claim that is incompatible with $\sigma\simeq c\xi$. Indeed, this implies that
$$
\xi\simeq \xi_1+\dots + \xi_K \simeq K\xi_1
$$ 
and thus all large frequencies must be close to $\xi/K$. By the definition of $\sigma$,
$$
\sigma = \sum_{j=k_0+1}^k \xi_j = \xi-\sum_{j=1}^{k_0} \xi_j \simeq \left(1-\frac{k_0}{K}\right)\xi,
$$
contradicting $\sigma\simeq c\xi$.

In conclusion, when one derives the flexible frequency-restricted estimate, if all frequencies are comparable, we can \textit{avoid} the case $\sigma\simeq c\xi$ for any given $c$ satisfying \eqref{eq:avoid}.
\end{nb}

\medskip

We now move to the proof of the induction in terms of the dimension $d$.

\begin{proof}[Proof of Theorem \ref{thm:ind_d}]
To simplify the exposition, we present the proof when there are no complex conjugates in the nonlinearity. We consider the duality version of \eqref{eq:multi},
\begin{equation}
	I:=\left|\int_{\tau=\sum_j \tau_j} \int_{\Gamma_\xi} \frac{m(\xi)\jap{\xi}^{s+\epsilon}\jap{\tau-L(\xi)}^{b'}}{\prod_{j=1}^k\jap{\xi_j}^s\jap{\tau_j-L(\xi_j)}^b}v(\tau,\xi)\prod_{j=1}^k v_j(\tau_j,\xi_j) d\xi_1\dots d\xi_k d\tau_1\dots d\tau_k\right| \lesssim \|v\|_{L^2}\prod_{j=1}^k\|v_j\|_{L^2}.
\end{equation}
Without loss in generality, we can assume that $|\xi_1|\gtrsim |\xi|$ and that $\xi=(\eta,\zeta)\in \R^{d_0}\times \R^{d-d_0}$, with $|\eta|\sim |\xi|$. Define $\tilde{s}=s-\Delta s$, for $\Delta s=\left(\frac{d-d_0}{2}\right)^+$, and notice that, since $s>s_c(k,d)$ and $\epsilon<\epsilon_c(k,d,s)$,
$$
\tilde{s}>s_c(k,d_0),\quad \epsilon<\epsilon_c(k,d_0,\tilde{s}).
$$
Then, since $\tilde{s}>0$,
$$
\frac{|m(\xi)|\jap{\xi}^{s+\epsilon}}{\prod_{j=1}^k\jap{\xi_j}^s}\lesssim \frac{|m(\xi)|\jap{\xi}^{\tilde{s}+\epsilon}}{\jap{\xi_1}^{\tilde{s}}\prod_{j=2}^k\jap{\xi_j}^{\tilde{s}+\Delta s}}\lesssim \frac{|m_0(\eta)|\jap{\eta}^{\tilde{s}+\epsilon}}{\prod_{j=1}^k\jap{\eta_j}^{\tilde{s}}}\cdot \prod_{j=2}^k\frac{1}{\jap{\zeta_j}^{\Delta s}}.
$$
As such, 
\begin{align*}
	I&\lesssim \int_{\tau=\sum_j \tau_j} \int_{\Gamma_\xi} \frac{|m_0(\eta)|\jap{\eta}^{\tilde{s}+\epsilon}\jap{\tau-L(\xi)}^{b'}}{\prod_{j=1}^k\jap{\eta_j}^{\tilde{s}}\jap{\tau_j-L(\xi_j)}^b}\left(\prod_{j=2}^k\frac{1}{\jap{\zeta_j}^{\Delta s}}\right)v\prod_{j=1}^k v_j d\xi_1\dots d\xi_k d\tau_1\dots d\tau_k \\&\lesssim \int_{\Gamma_\zeta}\left(\int_{\tau=\sum_j \tau_j} \int_{\Gamma_\eta} \frac{|m_0(\eta)|\jap{\eta}^{\tilde{s}+\epsilon}\jap{\tau-L(\xi)}^{b'}}{\prod_{j=1}^k\jap{\eta_j}^{\tilde{s}}\jap{\tau_j-L(\xi_j)}^b}v\prod_{j=1}^k v_j d\eta_1\dots d\eta_k d\tau_1\dots d\tau_k\right) \prod_{j=2}^k\frac{1}{\jap{\zeta_j}^{\Delta s}} d\zeta_1\dots d\zeta_k\\&=: \int_{\Gamma_\zeta} I(\zeta,\zeta_1,\dots, \zeta_k) \prod_{j=2}^k\frac{1}{\jap{\zeta_j}^{\Delta s}} d\zeta_1\dots d\zeta_k.
\end{align*}
Proceeding exactly as in the proof of \cite[Lemma 2]{cos} and using the frequency-restricted estimate \eqref{eq:fre_d},
$$
|I(\zeta, \zeta_1,\dots, \zeta_k)|\lesssim \|v(\zeta)\|_{L^2_{\tau,\eta}}\prod_{j=1}^k \|v_j(\zeta_j)\|_{L^2_{\tau,\eta}}.
$$
uniformly in $\zeta_1,\dots, \zeta_k$.
Therefore, by Cauchy-Schwarz,
\begin{align*}
	I&\lesssim \int_{\Gamma_\zeta} \|v(\zeta)\|_{L^2_{\tau,\eta}}\prod_{j=1}^k \|v_j(\zeta_j)\|_{L^2_{\tau,\eta}} \prod_{j=2}^k\frac{1}{\jap{\zeta_j}^{\Delta s}} d\zeta_1\dots d\zeta_k\\& \lesssim \sup_\zeta \left(\int_{\Gamma_\zeta}\prod_{j=2}^k\frac{1}{\jap{\zeta_j}^{2\Delta s}} d\zeta_2\dots d\zeta_k \right)^{\frac{1}{2}} \|v\|_{L^2}\prod_{j=1}^k \|v_j\|_{L^2} \lesssim  \|v\|_{L^2}\prod_{j=1}^k \|v_j\|_{L^2} 
\end{align*}
and the proof is finished.
\end{proof}

\begin{nb}
	The specific choice of sets $A$ and multipliers $\mathcal{M}_1,\mathcal{M}_2$ in a frequency-restricted estimate may depend on specific regions in the frequency space. This poses no difficulty, as one can first split the multilinear estimate into the various regions, use the frequency-restricted estimate to prove the multilinear estimate and them sum all contributions (as long as the number of such regions is finite).
\end{nb}

\section{Application I: the generalized Korteweg-de Vries equation}\label{sec:kdv}

In this section, we prove Theorem \ref{thm:gkdv}. In this case,
$$
\Phi=\xi^3-\sum_{j=1}^k \xi_j^3
$$
and the Fourier multiplier for the nonlinear term is
$$
m(\xi_1,\dots, \xi_k)=\xi.
$$
Clearly, this multiplier satisfies \eqref{eq:ind_multiplier}. As such, Theorem \ref{thm:gkdv} is a direct consequence of Theorem \ref{thm:ind_k} once we prove the flexible frequency-restricted estimate for $k=5$.

\begin{prop}\label{prop:5kdv}
	Fix $k=5$. Given $s>s_c(5,1)=0$ and $\epsilon <\epsilon_c(5,1,s)=\min\{4s,1\}$, the flexible frequency-restricted estimate holds.
\end{prop}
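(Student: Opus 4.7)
The plan is to verify the two flexible frequency-restricted estimates \eqref{eq:ffre1}--\eqref{eq:ffre2} for the gKdV multiplier $m(\xi_1,\ldots,\xi_5)=\xi$ and resonance $\Phi=\xi^3-\sum_{j=1}^{5}\xi_j^3$. By the permutation symmetry of the nonlinearity in its inputs, I may order $|\xi_1|\geq|\xi_2|\geq\cdots\geq|\xi_5|$, so that $|\xi|\lesssim|\xi_1|$, and perform a dyadic decomposition $N_j\sim|\xi_j|$. My choice of set is $A$ of moderate size (for instance $A=\{0,1\}$, fixing the output $\xi$ and the largest input $\xi_1$), so that both integrals are genuinely multidimensional; the splits $s_j+s_j'=2s$ and $\epsilon_0+\epsilon_0'=2\epsilon$ will be distributed asymmetrically so that the regularity budget falls on the variables being integrated in each FRE.

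The core analytic step is a change of variables driven by the resonance. Resolving the hyperplane constraint $\xi+\sigma=\sum_{j=1}^{5}\xi_j$ by eliminating one variable, the partial derivatives of $\Phi$ with respect to any remaining variable take the form $\pm 3(\xi_i^2-\xi_j^2)$. In every dyadic configuration, a pigeonhole argument --- combined if necessary with the exclusion afforded by Remark \ref{rem:avoid} --- produces a pair with $|\xi_i^2-\xi_j^2|\sim N_1^2$, so that a single change of variables $\xi_j\mapsto\Phi$ contributes the factor $M\,N_1^{-2}$. This is exactly what is required to absorb the derivative loss $|m|\sim N_1$ together with the nonlinear-smoothing weight $\jap{\xi}^{\epsilon}$, given $\epsilon<1$.

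The residual integration over the remaining lower frequencies is handled by the weights $\jap{\xi_j}^{-s_j}$ and $\jap{\xi_j}^{-s_j'}$. The freedom afforded by $s_j+s_j'=2s$ and $\epsilon_0+\epsilon_0'=2\epsilon$ allows me, in each of the two FREs and in each dyadic regime, to concentrate the regularity on the variables being integrated, producing absolutely integrable one-dimensional integrands whenever the effective exponent can be arranged to exceed $1$. In the fully comparable regime $N_1\sim N_2\sim\cdots\sim N_5$, the residual multi-dimensional integration produces a factor $\sim N_1^{\mathrm{dim}-\sum s_j}$ that has to be balanced against the total budget $2s$ per frequency, and this is precisely what forces the binding bound $\epsilon<4s$.

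The main obstacle is expected to be this fully comparable regime. No pair of frequencies is a priori well-separated, the resonance can degenerate, and the transfer of regularity between all five high-frequency factors must be exploited simultaneously; here Remark \ref{rem:avoid} (excluding the degenerate subset where the change-of-variables determinant vanishes) is essential, and the choice of which variable to change to $\Phi$ must be made carefully in each subregion. Away from it, the high-low configurations reduce to a routine application of the resonance-based change of variables combined with standard one-dimensional integrability estimates.
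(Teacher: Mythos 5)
Your proposal correctly identifies the resonance-driven change of variables $\xi_j\mapsto\Phi$, using $|\partial_{\xi_j}\Phi|=3|\xi_i^2-\xi_j^2|$, as the engine of the estimate, and it correctly flags the fully comparable regime as the genuine obstacle. But the mechanism you offer there does not work, and this is where the argument fails to close.

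In the fully comparable regime the resonance $\Phi=\xi^3-\sum_{j=1}^5\xi_j^3$ has true stationary points: for instance $\xi_1=\cdots=\xi_5\simeq\xi/5$ with $\sigma\simeq0$. At such a configuration \emph{every} difference $\xi_i^2-\xi_j^2$ is small, so there is no single variable whose change to $\Phi$ produces a factor $MN_1^{-2}$; pigeonholing cannot manufacture a well-separated pair that does not exist. Nor can Remark~\ref{rem:avoid} rescue you: the avoidable values are $c\neq 1-k_0/K$ for $k_0\le K\le k$, and taking $K=k_0=5$ shows $c=0$ is \emph{not} avoidable, yet $\sigma\simeq0$ is exactly the value of $\sigma$ that occurs at the stationary configuration above. (The paper's own proof of this proposition never invokes Remark~\ref{rem:avoid} at all; that device is reserved for the gZK and NLS applications.) What is actually needed---and what the paper supplies---is a two-dimensional quadratic normal form: one renormalizes $p_j=\xi_j/\xi$, writes $\Phi=\xi^3 P$ with $P=1-\sum p_j^3$, checks that the critical points of $P$ in the two integration variables (e.g.\ $(1/3,1/3)$, $(1,\pm1)$, $(-1,1)$) have nondegenerate Hessian, and applies Morse's lemma with parameters to change $(p_1,p_2)\mapsto(q_1,q_2)$ so that $P=P(p^0)+\lambda_1 q_1^2+\lambda_2 q_2^2$. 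The area of $\{|\lambda_1 q_1^2+\lambda_2 q_2^2-\tilde\alpha|<M/|\xi|^3\}$ then gives the $M^{1^-}$ gain. This is a qualitatively different tool from a one-dimensional change of variables and cannot be replaced by the combination you propose; without it the bound $\epsilon<4s$ that you correctly identify as binding in this regime is not actually derived. A secondary, lesser concern: your choice $A=\{0,1\}$ produces one three-dimensional and one one-dimensional integral, whereas the paper's $A=\{1,3,5\}$ yields two two-dimensional integrals, matching the two-variable quadratic normal form; with $A=\{0,1\}$ the Morse step would have to be rethought even once you add it.
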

\begin{proof}
	We begin by ordering $|\xi_1|\ge \dots \ge |\xi_5|$, which implies $|\xi_1|\gtrsim |\xi|$. We can suppose that $|\xi|\ge |\xi_1|$: if not,
	$$
	\frac{|\xi|\jap{\xi}^s}{\jap{\xi_1}^s}\lesssim 	\frac{|\xi_1|\jap{\xi_1}^s}{\jap{\xi}^s}
	$$
	and we may exchange $\xi$ with $\xi_1$. If $|\xi|\lesssim 1$, the multiplier is bounded and so is the domain of integration, meaning that we can bound \eqref{eq:ffre1}-\eqref{eq:ffre2} directly. Hence we suppose $|\xi|\gg 1$ from now on.
	
\medskip
	
\noindent \textbf{Case 1.} $|\xi_1| \not \simeq |\xi_3|$, $|\xi| \not \simeq |\xi_2|$.	We take $A=\{1,3,5\}$ and we must show
	\begin{equation}
		\label{estimate1.a}
		\sup_{\sigma, \alpha, \xi, \xi_2, \xi_4} \int_{\Gamma_{\xi}^{\sigma}} \frac{|\xi| \langle \xi \rangle^{s+\varepsilon}}{\langle \xi_1 \rangle^{s} \langle \xi_3 \rangle^{2s} \langle \xi_5 \rangle^{2s}} \mathbbm{1}_{|\Phi-\alpha| < M}\, d\xi_1 \, d\xi_5 \lesssim M^{1^-}
	\end{equation}
	and
	\begin{equation}
		\label{estimate1.b}
		\sup_{\sigma, \alpha, \xi_1, \xi_3, \xi_5} \int_{\Gamma_{\xi}^{\sigma}} \frac{|\xi| \langle \xi \rangle^{s+\varepsilon}}{\langle \xi_1 \rangle^{s} \langle \xi_2 \rangle^{2s}\langle \xi_{4} \rangle^{2s}} \mathbbm{1}_{|\Phi-\alpha| < M}\, d\xi \, d\xi_4 \lesssim M^{1^-}.
	\end{equation}
	For the first estimate, let us fix $\xi, \xi_2, \xi_4$. Observe that
	\begin{equation}
	\left|\frac{\partial \Phi}{\partial \xi_1}\right| = 3 |\xi_1^2-\xi_3^2| \gtrsim |\xi_1|^2 \sim |\xi|^2
	\end{equation}
	and so we can perform the change of variable $\xi_1 \mapsto \Phi$. Let $\eta>0$ be sufficiently small. We have
	\begin{equation}
		\begin{split}
			&\  \int_{\Gamma_{\xi}^{\sigma}} \frac{|\xi| \langle \xi \rangle^{s+\varepsilon}}{\langle \xi_1 \rangle^{s}\langle \xi_3 \rangle^{2s} \langle \xi_5 \rangle^{2s}}  \mathbbm{1}_{|\Phi-\alpha| < M} \, d\xi_1\, d\xi_5
			\\\lesssim &\  \int_{\Gamma_{\xi}^{\sigma}} \frac{ |\xi|e^{1+\varepsilon}}{\langle \xi_5 \rangle^{4s}} \mathbbm{1}_{|\Phi-\alpha| < M} \, d\xi_1\, d\xi_5
			\\\leq &\ \Bigg(\int_{\Gamma_{\xi}^{\sigma}} \frac{| \xi|^{(1+\varepsilon)(1+\eta)}}{\langle \xi_5 \rangle^{4s(1+\eta)}} \mathbbm{1}_{|\Phi-\alpha| < M} \, d\xi_1\, d\xi_5\Bigg)^{\frac{1}{1+\eta}} \Bigg(\int_{|\xi_1|,|\xi_5| \leq |\xi|} 1 \, d\xi_1\, d\xi_5\Bigg)^{\frac{\eta}{1+\eta}}
			\\\lesssim &\  |\xi|^{1+\varepsilon+\frac{2 \eta}{1+\eta}} \Bigg(\int_{\Gamma_{\xi}^{\sigma}} \frac{1}{\langle \xi_5 \rangle^{4s(1+\eta)}} \mathbbm{1}_{|\Phi-\alpha| < M} \frac{1}{|\xi|^2} \, d\Phi\, d\xi_5\Bigg)^{\frac{1}{1+\eta}}
			\\\lesssim &\ |\xi|^{\varepsilon-1+\frac{4 \eta}{1+\eta}} M^{\frac{1}{1+\eta}} \Bigg(\int \frac{1}{\langle \xi_5 \rangle^{4s(1+\eta)}} \, d\xi_5\Bigg)^{\frac{1}{1+\eta}}\\ \lesssim&\  |\xi|^{\varepsilon-1+\frac{4 \eta}{1+\eta}} M^{\frac{1}{1+\eta}} \min\{ 1, |\xi|^{4s+\frac{1}{1+\eta}}\} \lesssim M^{\frac{1}{1+\eta}},
		\end{split}
	\end{equation}
as long as $\epsilon<\min\{4s,1\}$. Thus \eqref{estimate1.a} follows. By symmetry, the proof of \eqref{estimate1.b} follows the exact same steps. 

\begin{nb}
	Before we proceed to the remaining cases, let us simplify the exposition by deriving frequency-restricted estimates with a full power of $M$. To obtain a power slightly smaller than $1$, it suffices to apply Hölder's inequality as shown above. This remark also applies to Sections \ref{sec:zk} and \ref{sec:nls}.
\end{nb}

\medskip

\noindent\textbf{Case 2.} $|\xi_1| \not \simeq |\xi_3|$, $|\xi| \simeq |\xi_2|$. In this case, $|\xi_4| \not \simeq |\xi|$, and we claim further that $|\xi_4| \gtrsim |\xi_2|$. Indeed, if this did not happen, we would have $\xi \simeq \xi_1+\xi_2+\xi_3$, with $\xi_1 \simeq \pm \xi$ and $\xi_2 \simeq \pm \xi$. Therefore, we would need $\xi_3 \simeq \pm\xi$ or $\xi_3\simeq 3\xi$, which is impossible since $\xi_3 \not \simeq \xi_1 \simeq \xi$ and $|\xi_3| \leq |\xi_1|$.
We then prove that
\begin{equation}
	\label{estimate2.a}
	\sup_{\sigma, \alpha, \xi, \xi_2, \xi_4} \int_{\Gamma_{\xi}^{\sigma}} \frac{|\xi| \langle \xi \rangle^{s+\varepsilon}}{\langle \xi_1 \rangle^{s} \langle \xi_3 \rangle^{2s} \langle \xi_5 \rangle^{2s}} \mathbbm{1}_{|\Phi-\alpha| < M}\, d\xi_1 d\xi_5 \lesssim M
\end{equation}
and
\begin{equation}
	\label{estimate2.b}
	\sup_{\sigma, \alpha, \xi_1, \xi_3, \xi_5} \int_{\Gamma_{\xi}^{\sigma}} \frac{|\xi| \langle \xi \rangle^{s+\varepsilon}}{\langle \xi_1 \rangle^{s} \langle \xi_2 \rangle^{2s}\langle \xi_{4} \rangle^{2s}} \mathbbm{1}_{|\Phi-\alpha| < M}\, d\xi d\xi_2 \lesssim M
\end{equation}
The first estimate is completely analogous to the one in the first case. For the second one is similar: since
\begin{equation}
	\left|\frac{\partial \Phi}{\partial \xi}\right| = 3 |\xi^2-\xi_4^2| \geq 3 |\xi|^2,
\end{equation}
we perform the change of variables $\xi\mapsto \Phi$ and use $|\xi_4|\gtrsim |\xi_2|$ to obtain
\begin{align*}
	\int_{\Gamma_{\xi}^{\sigma}} \frac{|\xi| \langle \xi \rangle^{s+\varepsilon}}{\langle \xi_1 \rangle^{s} \langle \xi_2 \rangle^{2s}\langle \xi_{4} \rangle^{2s}} \mathbbm{1}_{|\Phi-\alpha| < M}\, d\xi d\xi_2 \lesssim \int_{\Gamma_{\xi}^{\sigma}} \frac{|\xi|^{1+\epsilon}}{\langle \xi_2 \rangle^{4s}} \mathbbm{1}_{|\Phi-\alpha| < M}\, \frac{1}{|\xi|^2}d\Phi d\xi_2 \lesssim M.
\end{align*}

\medskip

\noindent\textbf{Case 3.} $|\xi_1| \simeq |\xi_3|$, $|\xi_1| \not \simeq |\xi_5|$, $|\xi| \not \simeq |\xi_2|$ (which implies in particular that $|\xi| \not \simeq |\xi_4|$).

\medskip

\noindent\textbf{Subcase 3.1} $|\xi_5| \gtrsim |\xi_3|$. Here, we show that
\begin{equation}
	\label{estimate3.1.a}
	\sup_{\sigma, \alpha, \xi, \xi_2, \xi_4} \int_{\Gamma_{\xi}^{\sigma}} \frac{|\xi| \langle \xi \rangle^{s+\varepsilon}}{\langle \xi_1 \rangle^{s} \langle \xi_3 \rangle^{2s} \langle \xi_5 \rangle^{2s}} \mathbbm{1}_{|\Phi-\alpha| < M}\, d\xi_1 \, d\xi_3 \lesssim M^{1^-}
\end{equation}
and
\begin{equation}
	\label{estimate3.1.b}
	\sup_{\sigma, \alpha, \xi_1, \xi_3, \xi_5} \int_{\Gamma_{\xi}^{\sigma}} \frac{|\xi| \langle \xi \rangle^{s+\varepsilon}}{\langle \xi_1 \rangle^{s} \langle \xi_2 \rangle^{2s}\langle \xi_{4} \rangle^{2s}} \mathbbm{1}_{|\Phi-\alpha| < M}\, d\xi \, d\xi_4 \lesssim M^{1^-}.
\end{equation}
The second estimate is completely analogous to \eqref{estimate1.b}. For the first, the fact that $|\partial_{\xi_1}\Phi|\gtrsim |\xi|^2$ and $|\xi_5|\gtrsim |\xi_3|$ allows us to proceed as in Case 2.

\medskip

\noindent\textbf{Subcase 3.2.}
$|\xi_5| \ll |\xi_3|$. In this case, we prove that
\begin{equation}
	\label{estimate3.2.b}
	\sup_{\sigma, \alpha, \xi_1, \xi_2, \xi_3} \int_{\Gamma_{\xi}^{\sigma}} \frac{|\xi| \langle \xi \rangle^{s+\varepsilon}}{\langle \xi_1 \rangle^{s} \langle \xi_4 \rangle^{2s}\langle \xi_{5} \rangle^{2s}} \mathbbm{1}_{|\Phi-\alpha| < M}\, d\xi \, d\xi_5 \lesssim M^{1^-}
\end{equation}
and
\begin{equation}
	\label{estimate3.2.a}
	\sup_{\sigma, \alpha, \xi, \xi_4, \xi_5} \int_{\Gamma_{\xi}^{\sigma}} \frac{|\xi| \langle \xi \rangle^{s+\varepsilon}}{\langle \xi_1 \rangle^{s} \langle \xi_2 \rangle^{2s} \langle \xi_3 \rangle^{2s}} \mathbbm{1}_{|\Phi-\alpha| < M}\, d\xi_1 \, d\xi_2 \lesssim M^{1^-}.
\end{equation}
For the first estimate, since $|\partial_\xi \Phi|\gtrsim |\xi|^2$,
$$
\int_{\Gamma_{\xi}^{\sigma}} \frac{|\xi| \langle \xi \rangle^{s+\varepsilon}}{\langle \xi_1 \rangle^{s} \langle \xi_4 \rangle^{2s}\langle \xi_{5} \rangle^{2s}} \mathbbm{1}_{|\Phi-\alpha| < M}\, d\xi d\xi_5 \lesssim \int_{|\xi_5|\le |\xi|} \frac{|\xi| \langle \xi \rangle^{\varepsilon}}{\langle \xi_{5} \rangle^{4s}} \mathbbm{1}_{|\Phi-\alpha| < M}\, \frac{1}{|\xi|^2}d\Phi d\xi_5 \lesssim M.
$$
For the second estimate, observe that $|\nabla_{\xi_1,\xi_2}\Phi|\ll |\xi|^2$.
In this stationary case, we replace $\Phi$ with a quadratic form using Morse's lemma. Define
\begin{equation}
	p_j := \frac{\xi_j}{\xi}\quad\mbox{ and } \quad P:=1-\sum_{j=1}^5 p_j^3.
\end{equation}
In particular, $\Phi=\xi^3P$ and we are near one of the stationary points\footnote{As $\xi_4, \xi_5$ and $\sigma$ are negligible, the resonance function behaves as the one of the modified KdV equation.}
$$
(p^0_1,p^0_2)= \left(\frac{1}{3},\frac{1}{3}\right),\quad (1,1),\quad (1,-1)\quad\mbox{or}\quad(-1,1).
$$
A direct computation shows that these critical points are non-degenerate. As such, by Morse's lemma (with parameters) \cite[Lemma C.6.1]{hormanderIII}, around each critical point, there exists a local change of coordinates $(p_1,p_2)\mapsto (q_1,q_2)$ such that
$$
P=P(p_1^0,p_2^0) + \lambda_1q_1^2 + \lambda_2q_2^2,\quad \lambda_1,\lambda_2\in\{\pm 1\},\quad |q_1|, |q_2|\ll1.
$$
Moreover, since the space of the parameters $\{p_4,p_5,\sigma/\xi\}$ is compact, the size of the local neighborhoods and the jacobian of the coordinate chart can be made uniform in $p_4, p_5$ and $\sigma$. As such,
\begin{align*}
	&\int_{\Gamma_{\xi}^{\sigma}} \frac{|\xi| \langle \xi \rangle^{s+\varepsilon}}{\langle \xi_1 \rangle^{s} \langle \xi_2 \rangle^{2s} \langle \xi_3 \rangle^{2s}} \mathbbm{1}_{|\Phi-\alpha| < M}\, d\xi_1  d\xi_2 \\ \lesssim & \int_{\Gamma_{\xi}^{\sigma}} |\xi|^{1+\epsilon-4s} \mathbbm{1}_{|\xi^3P-\alpha| < M}\, \xi^2dp_1 dp_2 \\ \lesssim & \int_{|q_1|, |q_2|\ll 1} |\xi|^{1+\epsilon-4s} \mathbbm{1}_{\left|\lambda_1q_1^2 + \lambda_2q_2^2-\frac{\alpha}{\xi^3}\right| < \frac{M}{|\xi|^3}}\, \xi^2dq_1 dq_2 \lesssim M^{1^-}.
\end{align*}

\medskip
\noindent \textbf{Case 4.} $|\xi|\simeq |\xi_1|\simeq  |\xi_2|\simeq |\xi_3|\not\simeq |\xi_5|$.

\medskip
\noindent \textbf{Subcase 4.1.} $|\xi_5| \gtrsim |\xi_3|$, $|\xi_4| \not \simeq |\xi|$ (which implies, in particular, that $|\xi_4|\gtrsim |\xi_2|$).  We take $A=\{1,3,5\}$ and we prove that

\begin{equation}
	\label{estimate4.1.a}
	\sup_{\sigma, \alpha, \xi, \xi_2, \xi_4} \int_{\Gamma_{\xi}^{\sigma}} \frac{|\xi| \langle \xi \rangle^{s+\varepsilon}}{\langle \xi_1 \rangle^{s} \langle \xi_3 \rangle^{2s} \langle \xi_5 \rangle^{2s}} \mathbbm{1}_{|\Phi-\alpha| < M}\, d\xi_1 \, d\xi_3 \lesssim M^{1^-}
\end{equation}
and
\begin{equation}
	\label{estimate4.1.b}
	\sup_{\sigma, \alpha, \xi_1, \xi_3, \xi_5} \int_{\Gamma_{\xi}^{\sigma}} \frac{|\xi| \langle \xi \rangle^{s+\varepsilon}}{\langle \xi_1 \rangle^{s} \langle \xi_2 \rangle^{2s}\langle \xi_{4} \rangle^{2s}} \mathbbm{1}_{|\Phi-\alpha| < M}\, d\xi \, d\xi_2 \lesssim M^{1^-}.
\end{equation}
This follows the same arguments as in Case 2, transferring the weight from the smallest frequency to the intermediate one and then performing a change of variables from the largest frequency to $\Phi$: for example,
\begin{equation}\label{eq:est41}
	 \int_{\Gamma_{\xi}^{\sigma}} \frac{|\xi| \langle \xi \rangle^{s+\varepsilon}}{\langle \xi_1 \rangle^{s} \langle \xi_3 \rangle^{2s} \langle \xi_5 \rangle^{2s}} \mathbbm{1}_{|\Phi-\alpha| < M}\, d\xi_1 \, d\xi_3 \lesssim  \int_{|\xi_3|\lesssim |\xi|} \frac{|\xi|^{1+\epsilon}}{ \langle \xi_3 \rangle^{4s} } \mathbbm{1}_{|\Phi-\alpha| < M}\, \frac{1}{\xi^2}d\Phi \, d\xi_3 \lesssim M.
\end{equation}
\medskip

\noindent\textbf{Subcase 4.2.} $|\xi_5| \gtrsim |\xi_3|$, $|\xi_4| \simeq |\xi|$. Observe that $|\xi_5|\not\simeq |\xi_1|\simeq |\xi_4|$. In this case we prove
\begin{equation}
	\label{estimate4.2.a}
	\sup_{\sigma, \alpha, \xi, \xi_4, \xi_5} \int_{\Gamma_{\xi}^{\sigma}} \frac{|\xi| \langle \xi \rangle^{s+\varepsilon}}{\langle \xi_1 \rangle^{s} \langle \xi_2 \rangle^{2s} \langle \xi_3 \rangle^{2s}} \mathbbm{1}_{|\Phi-\alpha| < M}\, d\xi_1 \, d\xi_2 \lesssim M^{1^-}
\end{equation}
and
\begin{equation}
	\label{estimate4.2.b}
	\sup_{\sigma, \alpha, \xi_1, \xi_2, \xi_3} \int_{\Gamma_{\xi}^{\sigma}} \frac{|\xi| \langle \xi \rangle^{s+\varepsilon}}{\langle \xi_1 \rangle^{s} \langle \xi_4 \rangle^{2s}\langle \xi_{5} \rangle^{2s}} \mathbbm{1}_{|\Phi-\alpha| < M}\, d\xi \, d\xi_4 \lesssim M^{1^-}
\end{equation}
The first estimate follows exactly as in Subcase 3.2. For the second estimate, one proceeds as in \eqref{eq:est41}.

\medskip

\noindent \textbf{Subcase 4.3.} $|\xi_5| \ll |\xi_3|$. We claim that $|\xi_4| \ll |\xi|$. Indeed, since $|\sigma|\lesssim |\xi_5|\ll |\xi|$,
$$
\xi_4 \simeq \xi-\sum_{j=1}^3 \xi_j \simeq k\xi,\quad \mbox{for some }k\in\{0,\pm2, 4\}.
$$
The cases $k=\pm 2, 4$ are impossible as $|\xi_4|\le |\xi_1|\simeq |\xi|$.
As such, the proof of \eqref{estimate4.2.a} follows as before, while \eqref{estimate4.2.b} is a consequence of the arguments used in Case 1:
$$
\int_{\Gamma_{\xi}^{\sigma}} \frac{|\xi| \langle \xi \rangle^{s+\varepsilon}}{\langle \xi_1 \rangle^{s} \langle \xi_4 \rangle^{2s}\langle \xi_{5} \rangle^{2s}} \mathbbm{1}_{|\Phi-\alpha| < M}\, d\xi \, d\xi_4 \lesssim \int_{|\xi_5|\ll |\xi|\simeq |\xi_1|} \frac{|\xi|^{1+\epsilon} }{ \langle \xi_{5} \rangle^{4s}} \mathbbm{1}_{|\Phi-\alpha| < M}\frac{1}{\xi^2} d\Phi d\xi_5 \lesssim M.
$$

\medskip 
\noindent\textbf{Case 5.} $|\xi_1| \simeq |\xi_3|$, $|\xi_1| \simeq |\xi_5|$. This is the fully stationary case, where all frequencies are comparable and the resonance is stationary regardless of the combination of frequencies. In this case, \eqref{estimate1.a} and \eqref{estimate1.b} follow from an application of Morse's lemma as in Subcase 3.2. 
\end{proof}

\begin{nb}
	In the above proof, one sees that the introduction of the flexibility parameter $\sigma$ causes almost no difficulty in deriving the frequency-restricted estimate. The fact that $\sigma\lesssim \min_j |\xi_j|$ was only needed briefly in Case 2 and Subcase 4.3 to exclude the possibility of a $high\times low$ interaction. 
\end{nb}

\begin{nb}
	If one aims only to prove local well-posedness (that is, with $\epsilon=0$), the proof can be greatly shortened, as the jacobian of the change of variables can be used to integrate in the remaining variable and there is no need to carefully transfer weights from one frequency to another.
\end{nb}

\section{Application II: the generalized Zakharov-Kuznetsov equation}\label{sec:zk}

\begin{proof}[Proof of Theorem \ref{thm:2dzk}]
By Proposition \ref{prop:5kdv}, the flexible frequency-restricted estimate holds for the quintic KdV. By Theorem \ref{thm:ind_k}, this implies that the frequency-restricted estimates for \eqref{gkdv} hold for any $k\ge 5$. Now observe that \eqref{zksym} has a linear operator of the form \eqref{eq:Ld}, with $L_0(\xi)=\xi^3$, and the nonlinear multiplier satisfies
$$
|m(\xi)|=|\xi^1 + \xi^2|\lesssim \max\{|\xi^1|, |\xi^2|\},\quad \xi=(\xi^1,\xi^2).
$$
As such, Theorem \ref{thm:2dzk} follows directly from Theorem \ref{thm:ind_d}.
\end{proof}

For $d\ge 3$, there is no symmetrization that allows us to induce the result from the generalized KdV and we are forced to work directly in dimension $d$. By induction in $k$, Theorem \ref{thm:gzk} will be a direct consequence of the following proposition.

\begin{prop}
	Fix $d\ge 3$. Given $s>s_c(3,d)=\frac{d}{2}-1$ and $\epsilon<\epsilon_c(3,d,s)=\min\{2s-d+2,1\}$, the flexible frequency-restricted estimate holds. 
\end{prop}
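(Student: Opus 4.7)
The proof would closely parallel the cubic $d\ge 3$ analysis of \cite{cos}, now tracking the deformation parameter $\sigma$ that appears on $\Gamma_\xi^\sigma$. The resonance function is
$$\Phi = \xi^1|\xi|^2 - \sum_{j=1}^3 \xi_j^1|\xi_j|^2,\qquad \xi=\xi_1+\xi_2+\xi_3-\sigma,\ |\sigma|\lesssim \min_j|\xi_j|,$$
and the Fourier multiplier is $m(\xi_1,\xi_2,\xi_3)=\xi^1$, so $|m|\lesssim|\xi|$. I would first order $|\xi_1|\ge|\xi_2|\ge|\xi_3|$ (giving $|\xi_1|\gtrsim|\xi|$), restrict to $|\xi|\gg 1$ (the complement being trivial since both the multiplier and the domain are bounded), and set up the weight splitting as in Proposition \ref{prop:5kdv}, choosing a proper subset $A\subset\llbracket 0,3\rrbracket$ adapted to each dyadic subcase and integrating over a single vector variable in $\R^d$.

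In the non-stationary cases — those where not all three input frequencies are comparable to $\xi$ — I would exploit a lower bound on some component of $\nabla\Phi$. For instance, if $|\xi_1|\gg|\xi_2|$, the computation
$$\partial_{\xi_1^1}\Phi \;=\; |\xi|^2+2(\xi^1)^2 - |\xi_1|^2 - 2(\xi_1^1)^2$$
(and analogously for the other coordinates $\partial_{\xi_1^i}\Phi = 2\xi^1\xi^i - 2\xi_1^1 \xi_1^i$, $i\ge 2$) furnishes $|\partial_{\xi_1^i}\Phi|\gtrsim|\xi|^2$ for at least one coordinate, up to excluding a lower-dimensional sphere absorbed into the other components. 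A change of variables in that coordinate to $\Phi$ supplies a factor $|\xi|^{-2}$, the $\Phi$-slab of length $M$ contributes $M$, and the remaining $d-1$ coordinates are controlled by the Sobolev weights after transferring regularity from $\xi_3$ to $\xi_2$ exactly as in Proposition \ref{prop:5kdv}. The resulting power of $|\xi|$ closes provided $\epsilon<\min\{2s-d+2,1\}=\epsilon_c(3,d,s)$.

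The decisive case is the fully stationary one, in which $|\xi_1|\sim|\xi_2|\sim|\xi_3|\sim|\xi|$ and $\nabla\Phi$ may vanish. Following \cite{cos}, I would rescale $p_j=\xi_j/|\xi|$, use the constraint on $\Gamma_\xi^\sigma$ to eliminate one variable, and write
$$\Phi = |\xi|^3 \, P\bigl(p_1,p_2;\, \xi/|\xi|,\,\sigma/|\xi|\bigr),$$
where the parameters range over a compact set (since $|\sigma|\lesssim|\xi|$ in this case). A direct Hessian computation — essentially the same as in \cite{cos}, and where the hypothesis $d\ge 3$ is crucial — shows that all critical points of $P$ in the relevant region are non-degenerate, uniformly in the parameters. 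Morse's lemma with parameters \cite[Lemma C.6.1]{hormanderIII} then locally straightens $P$ into a non-degenerate indefinite quadratic form $Q$ in $d$ variables; the slab $\{|Q-c|<M/|\xi|^3\}\cap\{|q|\lesssim 1\}$ has measure $\lesssim M/|\xi|^3$ up to a harmless logarithmic loss, which combined with the $|\xi|^d$ Jacobian of the rescaling and the weights produces the desired $M^{1^-}$ bound.

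The main obstacle will be checking the non-degeneracy of the critical points of $P$ uniformly on the compact parameter space that now includes $\sigma/|\xi|$; once this structural fact is established for $\sigma=0$ as in \cite{cos}, extending it to the full range $|\sigma|\lesssim|\xi|$ follows by continuity and a finite compactness covering. The remaining bookkeeping — selecting $A$ and the balanced pairs $(s_j,s_j')$, $(\epsilon_0,\epsilon_0')$ in each subcase — is routine and mirrors the $d=1$ proof above.
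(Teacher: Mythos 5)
Your outline is broadly on the right track (rescaling $p_j=\xi_j/|\xi|$, nonstationary vs.\ stationary dichotomy, change of variable to $\Phi$ in the nonstationary case, Morse theory in the stationary case, compactness in the parameter $\sigma/|\xi|$), but there is a genuine gap in the stationary case that also causes you to skip a necessary preliminary step.

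You claim that ``a direct Hessian computation \ldots\ shows that all critical points of $P$ in the relevant region are non-degenerate'' and then invoke the full Morse lemma to straighten $P$ into a non-degenerate quadratic form in $d$ variables. This is false for $d\ge 3$. With $\xi,\xi_2,\sigma$ fixed, $D^2_{\xi_1}\Phi=-D^2L(\xi_1)-D^2L(\xi_3)=-D^2L(\xi_1+\xi_3)$ by linearity of $D^2L$, and for $L(\zeta)=\zeta^1|\zeta|^2$ one computes in $d=3$ that $\det D^2L(\zeta)=8\zeta^1\bigl(3(\zeta^1)^2-(\zeta^2)^2-(\zeta^3)^2\bigr)$, which vanishes on a codimension-one cone (and already at $\zeta^1=0$) while $\zeta\neq 0$. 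So even after ruling out $\xi_1+\xi_3\simeq 0$, the Hessian can fail to have full rank; the best one can guarantee is $\operatorname{rank} D^2_{p_1}P\ge 2$. The correct tool is therefore the Morse \emph{splitting} lemma, which produces $P=P(z)+\lambda_1 q_1^2+\lambda_2 q_2^2+h(q_3,\dots,q_d)$, and one only estimates the $(q_1,q_2)$-slab (measure $\lesssim (M/|\xi|^3)^{1^-}$) before integrating the remaining $d-2$ variables trivially; this is exactly what closes with $\epsilon<2s-d+2$.

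Moreover, even the rank-$\ge 2$ property is not automatic: if $\xi_1\simeq -\xi_3$ the Hessian degenerates to rank $0$. You never explain how to exclude this configuration. The paper handles it in a preliminary step: up to permuting $\xi_1,\xi_2,\xi_3$ one can always arrange $\xi\not\simeq\xi_2$ and $\xi_1\not\simeq-\xi_3$, because the only bad configuration (all frequencies comparable and aligned so that the exclusion fails for every permutation) forces $\sigma\simeq\pm 2\xi$, which is an \emph{avoidable} value of the deformation parameter in the sense of Remark~\ref{rem:avoid}. Without this reduction your stationary-phase step has nothing to stand on. The nonstationary part of your sketch and the treatment of the compact parameter space $\{\xi/|\xi|,\sigma/|\xi|\}$ are fine and match the paper.
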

\begin{proof}
Throughout the proof, write $\xi=(\xi^1,\dots,\xi^d)$. Without loss of generality, we suppose that $|\xi|\ge |\xi_1|\ge |\xi_2|\ge |\xi_3|$ and prove

\begin{equation}\label{eq:dzk1}
	\sup_{\sigma, \alpha, \xi, \xi_2} \int_{\Gamma_{\xi}^{\sigma}} \frac{|\xi| \langle \xi \rangle^{s+\varepsilon}}{\langle \xi_1 \rangle^{s} \langle \xi_3 \rangle^{2s}} \mathbbm{1}_{|\Phi-\alpha| < M}\, d\xi_1 \lesssim M^{1^-}
\end{equation}
and
\begin{equation}\label{eq:dzk2}
	\sup_{\sigma, \alpha, \xi_1, \xi_3} \int_{\Gamma_{\xi}^{\sigma}} \frac{|\xi| \langle \xi \rangle^{s+\varepsilon}}{\langle \xi_1 \rangle^{s} \langle \xi_2 \rangle^{2s}} \mathbbm{1}_{|\Phi-\alpha| < M}\, d\xi_2 \lesssim M^{1^-}
\end{equation}
where
$$
\Phi=L(\xi)-\sum_{j=1}^3 L(\xi_j),\quad L(\xi)=\xi^1\sum_{j=1}^d (\xi^j)^2.
$$
If $|\xi|<1$, both integrals are bounded directly. Henceforth we take $|\xi|>1$.
\smallskip

\textit{Step 1. Exclusion of problematic cases.} We claim that, up to permutations of $\xi_1,\xi_2$ and $\xi_3$, we can always assume that
\begin{equation}\label{eq:claim}
	|\xi|\gtrsim |\xi_1|\gtrsim |\xi_2|\gtrsim |\xi_3|, \quad \xi\not\simeq \xi_2,\quad  \xi_1\not\simeq -\xi_3.
\end{equation}
First consider the case $|\xi_3|\ll|\xi_1|$. Then obviously $\xi_3\not\simeq -\xi_1$. Since $\xi\simeq \xi_1+\xi_2$, $\xi\simeq \xi_2$ would imply $\xi_1\simeq 0$, which is impossible since $|\xi|\sim |\xi_1|$.

Now we consider the case $|\xi_3|\gtrsim |\xi_1|$. In particular, independently on permetations of $\xi_1,\xi_2$ and $\xi_3$, we always have $|\xi|\gtrsim |\xi_1|\gtrsim |\xi_2|\gtrsim |\xi_3|$. Suppose that, for any such permutation,
$$
\xi\not\simeq \xi_2 \quad \mbox{and}\quad \xi_1\not\simeq -\xi_3
$$
fails. Then one would have
$$
(\xi_1,\xi_2,\xi_3)\simeq (\lambda_1\xi, \lambda_2\xi, \lambda_3\xi),\quad (\lambda_1,\lambda_2,\lambda_3)\in\{ (+,+,+), (+,-,-), (-,+,-), (-,-,+) \},
$$
and then either $\sigma\simeq 2\xi$ or $\sigma\simeq -2\xi$. By Remark \ref{rem:avoid}, both cases can be disregarded and claim \eqref{eq:claim} holds.
\smallskip

\textit{Step 2. Proof of the frequency-restricted estimates.} Having excluded the problematic regions, the proof follows the same lines as \cite[Proposition 2]{cos} (which corresponds to $\sigma=0$). We sketch the proof of \eqref{eq:dzk1}, as the proof of \eqref{eq:dzk2} is analogous. For $\xi,\xi_2$ fixed, define
$$
p_j=\frac{\xi_j}{|\xi|},\quad \Phi=|\xi|^3P(p_j).
$$
It was shown in \cite[Proposition 2]{cos} that $|\nabla_{p_1}P|\ll 1$ implies $|p_1|\simeq |p_3|$. Morever, since, by \eqref{eq:claim}, $p_1\not\simeq -p_3$, we always have the semi-nondegeneracy condition $\text{rank}(D^2_{p_1}P)\ge 2$.

\medskip
\noindent\textbf{Case 1.} $|\xi_3|\gtrsim |\xi|$. We split between nonstationary and stationary (but nondegenerate) cases.

\medskip
\noindent\textbf{Subcase 1.1.} $|\nabla_{p_1}P|\gtrsim 1$. Suppose that $|\partial_{p_1^1}P|\gtrsim 1$. Then performing the change of variables $p_1^1\mapsto P$,
\begin{align*}
		&\sup_{\sigma, \alpha, \xi, \xi_2} \int_{\Gamma_{\xi}^{\sigma}} \frac{|\xi| \langle \xi \rangle^{s+\varepsilon}}{\langle \xi_1 \rangle^{s} \langle \xi_3 \rangle^{2s}} \mathbbm{1}_{|\Phi-\alpha| < M}\, d\xi_1 \\\lesssim& \sup_{\sigma, \alpha, \xi, \xi_2} \int_{\Gamma_{\xi}^{\sigma}} \langle \xi \rangle^{d+1+\varepsilon-2s} \mathbbm{1}_{|P-\tilde{\alpha}| < \frac{M}{|\xi|^3}}\, dp_1^1...\,dp_1^d \\\lesssim& \sup_{\sigma, \alpha, \xi, \xi_2} \langle \xi \rangle^{d+1+\varepsilon-2s} \int_{\Gamma_{\xi}^{\sigma}} \mathbbm{1}_{|P-\tilde{\alpha}| < \frac{M}{|\xi|^3}}\, dP dp_1^2...\,dp_1^d\\\lesssim&\ M\cdot \sup_{\sigma, \alpha, \xi, \xi_2} \langle \xi \rangle^{d+1+\varepsilon-2s-3} \int_{|p_1^2|,\dots, |p_1^d|\lesssim 1}   dp_1^2...\,dp_1^d \lesssim M.
\end{align*}

\noindent\textbf{Subcase 1.2.} $|\nabla_{p_1}P|\ll 1$. In particular, we are near a critical point $z\in \R^d$ with $\nabla_{p_1}P(z)=0$. Since $\text{rank}( D^2_{p_1}P)\ge 2$, by Morse's Splitting lemma (with parameters) \cite[Theorem 8.3]{mawhinwillem}, there exists a local change of coordinates $p_1\mapsto q_1$, $|q_1|\ll 1$ such that
$$
P(p_1)=P(z)+\lambda_1 (q_1^1)^2 +\lambda_2 (q_1^2)^2 + h(q_1^3,\dots, q_1^d),\quad \lambda_1,\lambda_2=\pm 1.
$$
Moreover, since the space of parameters $\{q, q_2, \sigma/|\xi|\}$ is compact, the size of the local neighborhood can be made uniform.
As such, 
\begin{align*}
	&\sup_{\sigma, \alpha, \xi, \xi_2} \int_{\Gamma_{\xi}^{\sigma}} \frac{|\xi| \langle \xi \rangle^{s+\varepsilon}}{\langle \xi_1 \rangle^{s} \langle \xi_3 \rangle^{2s}} \mathbbm{1}_{|\Phi-\alpha| < M}\, d\xi_1 \\\lesssim& \sup_{\sigma, \alpha, \xi, \xi_2} \int_{\Gamma_{\xi}^{\sigma}} \langle \xi \rangle^{d+1+\varepsilon-2s} \mathbbm{1}_{|P-\tilde{\alpha}| < \frac{M}{|\xi|^3}}\, dp_1^1...\,dp_1^d \\\lesssim& \sup_{\sigma, \alpha, \xi, \xi_2} \langle \xi \rangle^{d+1+\varepsilon-2s} \int_{\Gamma_{\xi}^{\sigma}} \mathbbm{1}_{|P(z)+\lambda_1 (q_1^1)^2 +\lambda_2 (q_1^2)^2 + h(q_1^3,\dots, q_1^d)-\tilde{\alpha}| < \frac{M}{|\xi|^3}}\, dq_1^1\dots  dq_1^d\\\lesssim&\ M^{1^-}\cdot \sup_{\sigma, \alpha, \xi, \xi_2} \langle \xi \rangle^{d^++1+\varepsilon-2s-3} \int_{|q_1^3|,\dots, |q_1^d|\lesssim 1}   dp_1^3...\,dp_1^d \lesssim M^{1^-}.
\end{align*}

\noindent\textbf{Case 2.} $|\xi_3|\ll |\xi|$. Then we always have $|\nabla_{p_1}P|\gtrsim 1$. Assuming that $|\partial_{p_1^1}P|\gtrsim 1$,
\begin{align*}
	\sup_{\sigma, \alpha, \xi, \xi_2}  \int \frac{|\xi|\jap{\xi}^{s+\epsilon}}{\jap{\xi_1}^s\jap{\xi_3}^{2s}}\fia d\xi_1 & \lesssim \sup_{\sigma, \alpha, \xi, \xi_2}  \int_{|p_1|\lesssim 1} \frac{|\xi|^{1+\epsilon}}{\jap{\xi_3}^{2s}}\mathbbm{1}_{||\xi|^3P-\alpha|<M} |\xi|^ddp_1\\&\lesssim \sup_{\sigma, \alpha, \xi, \xi_2}  \int_{|p_1|\lesssim 1} \frac{|\xi|^{2+\epsilon}}{\jap{\xi_3}^{2s}}\mathbbm{1}_{||\xi|^3P-\alpha|<M} |\xi|^{d-1}dPdp_1^2\dots dp_1^d\\&\lesssim\sup_{\sigma, \alpha, \xi, \xi_2}  \int_{|p_1|\lesssim 1} \frac{|\xi|^{2+\epsilon}}{\jap{(\xi_3^2,\dots,\xi_3^d)}^{2s}}\mathbbm{1}_{||\xi|^3P-\alpha|<M}dPd\xi_1^2\dots d\xi_1^d\\&\lesssim M\cdot \sup_{\sigma, \alpha, \xi, \xi_2}  \int_{|(\xi_3^2,\dots,\xi_3^d)|\ll |\xi|} \frac{1}{\jap{(\xi_3^2,\dots,\xi_3^d)}^{2s}|\xi|^{1-\epsilon}}d\xi_1^2\dots d\xi_1^d\\&\lesssim M.
\end{align*}
\end{proof}

\section{Application III: the nonlinear Schrödinger equation}\label{sec:nls}
In this last section, we prove Theorem \ref{thm:nls}. First, we remark that the equation is in the conditions of Theorem \ref{thm:ind_d}, which reduces Theorem \ref{thm:nls} to dimension $d=2$. Using the induction in $k$, all that is left is to prove the flexible frequency-restricted estimate for $k=3$ and $d=2$. In this case, the $3$-descents of the resonance function are of the form
\begin{equation}\label{eq:res_nls}
	\Phi=|\xi|^2 + \lambda_1|\xi_1|^2 + \lambda_2 |\xi_2|^2 + \lambda_3|\xi_3|^2,\quad \lambda_j\in \{\pm 1\},\quad j=1,2,3.
\end{equation}
\begin{prop}
	Fix $k=3$ and $d=2$. Given $s>s_c(3,2)=0$ and $\epsilon<\epsilon_c(3,2,s)=\min\{2s,1\}$, the flexible frequency-restricted estimate holds for all resonance functions of the form \eqref{eq:res_nls}.
\end{prop}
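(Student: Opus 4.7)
The multiplier is trivial ($m\equiv 1$), so the flexible estimates reduce to purely geometric bounds on sublevel sets of $\Phi$ on $\Gamma_\xi^\sigma$. I would follow the blueprint of \cite{cos} for $k=3$, $d=2$, adapted to accommodate the flexibility parameter $\sigma$ and the various $3$-descents of $\Phi$. Order $|\xi_1|\ge|\xi_2|\ge|\xi_3|$, so $|\xi_1|\gtrsim|\xi|$, and dispatch $|\xi|\lesssim 1$ directly by compactness of the effective domain. For $|\xi|\gg 1$, a natural starting choice is $A=\{0,2\}$ with balanced weights $(s_0,s_1,s_2,s_3)=(2s,2s,0,0)$, $(s'_0,s'_1,s'_2,s'_3)=(0,0,2s,2s)$, $\epsilon_0=\epsilon'_0=\epsilon$; both estimates reduce to controlling
\begin{equation*}
\sup_{\sigma,\alpha,\xi,\xi_2}\int_{\Gamma_\xi^\sigma}\jap{\xi}^{2s+\epsilon}\jap{\xi_1}^{-2s}\mathbbm{1}_{|\Phi-\alpha|<M}\,d\xi_1\lesssim M^{1^-},
\end{equation*}
together with a symmetric counterpart obtained by swapping the roles of fixed and integrated variables.

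Using the convolution constraint to write $\xi_3=c+\kappa\xi_1$ with $\kappa\in\{\pm 1\}$ and $c=c(\xi,\xi_2,\sigma)\in\R^2$, a direct computation gives
\begin{equation*}
\nabla_{\xi_1}\Phi=2(\lambda_1+\lambda_3)\xi_1+2\kappa\lambda_3 c,\qquad D^2_{\xi_1}\Phi=2(\lambda_1+\lambda_3)I.
\end{equation*}
If $|\nabla_{\xi_1}\Phi|\gtrsim|\xi|$ (non-stationary regime), a change of variables $\xi_1^a\mapsto\Phi$ in the dominant component produces a Jacobian $\gtrsim 1/|\xi|$, and the integral is bounded by $|\xi|^{\epsilon-1}M\cdot\int\jap{\xi_1^b}^{-2s}\,d\xi_1^b$; a H\"older interpolation then closes the estimate under $\epsilon<\min\{2s,1\}$. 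If $|\nabla_{\xi_1}\Phi|\ll|\xi|$ (stationary regime) and $\lambda_1=\lambda_3$, the Hessian is $\pm 4I$, all frequencies are $\simeq|\xi|$, and one rescales $p_j=\xi_j/|\xi|$, $\Phi=|\xi|^2 P$. Morse's lemma with parameters (exactly as in Subcase 3.2 of the proof of Proposition \ref{prop:5kdv}) brings $P$ into a quadratic normal form $P(z)\pm q_1^2\pm q_2^2$, whose sublevel set of width $M/|\xi|^2$ has area $\lesssim M^{1^-}/|\xi|^{2^-}$, with uniformity in $\sigma$ inherited from the compactness of the rescaled parameter set.

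The main obstacle is the stationary case $\lambda_1=-\lambda_3$, in which $D^2_{\xi_1}\Phi=0$ identically, $\Phi$ becomes affine in $\xi_1$, and the gradient $2\kappa\lambda_3 c$ may be small. I would resolve this by switching the choice of $A$ and the integration variable: a general choice integrating $\xi_i$ with $\xi_j$ eliminated via the constraint produces Hessian $2(\lambda_i+\lambda_j)I$, and complete degeneracy across all six unordered pairs $\{i,j\}\subset\{0,1,2,3\}$ would require the four $\lambda$'s to be pairwise distinct, which is impossible for $\pm 1$-valued parameters. Hence there is always at least one pair $\{i,j\}$ with $\lambda_i+\lambda_j\ne 0$, for which the Morse argument applies; the dependence of the specific choice on the frequency region is admissible by the remark following Theorem \ref{thm:ind_d}. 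The dual estimate is handled analogously, and Remark \ref{rem:avoid} is available as a backup to exclude any residual configurations $\sigma\simeq c\xi$ that might obstruct the case split.
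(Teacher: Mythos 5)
Your general strategy — fix $A$, parametrize one free variable, separate non-stationary from stationary, and use Morse's lemma in the non-degenerate stationary case — matches the paper's outline, and your formula $D^2_{\xi_1}\Phi=2(\lambda_1+\lambda_3)I$ after eliminating $\xi_3$ via the constraint is the right organizing observation. However, the key step, your treatment of the degenerate stationary case, has a genuine gap.

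The flexible frequency-restricted estimate requires \emph{both} \eqref{eq:ffre1} and \eqref{eq:ffre2} to hold with the \emph{same} set $A$. Thus, choosing $A$ splits $\{0,1,2,3\}$ into two pairs $A$ and $A^c$, and you must control the integral over $A^c$ (Hessian $\propto (\lambda_i+\lambda_j)I$ for $\{i,j\}=A^c$) \emph{and} the dual integral over $A$ (Hessian $\propto (\lambda_{i'}+\lambda_{j'})I$ for $\{i',j'\}=A$). Your argument guarantees that some pair has $\lambda_i+\lambda_j\neq 0$, but not that some \emph{split into two pairs} has both sums nonzero. Indeed, with $\lambda_0=1$ fixed, for $(\lambda_1,\lambda_2,\lambda_3)=(+,+,-)$ every split has exactly one degenerate pair: $\{0,1\}/\{2,3\}$ gives $(2,0)$, $\{0,2\}/\{1,3\}$ gives $(2,0)$, $\{0,3\}/\{1,2\}$ gives $(0,2)$. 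Likewise for $(-,-,-)$ every $\{0,j\}$ has sum $0$. So for these resonance functions the Hessian obstruction \emph{cannot} be dodged by any choice of $A$; "the dual estimate is handled analogously" fails, and Remark \ref{rem:avoid} is not an optional "backup" but the essential mechanism. Moreover, when the Hessian vanishes the function $\Phi$ is affine in the integration variable and the issue is whether the \emph{gradient} (a constant on the fiber) is $\gtrsim|\xi|$; this is a finer dichotomy than your Hessian-based one, and it is exactly what the paper analyzes (e.g.\ in Case 2 it reduces the degenerate situation to the region $\xi_1\simeq\xi_2\simeq-\xi_3$ and then in Case 4 to $\xi\simeq\xi_1\simeq\xi_2\simeq\xi_3$, showing each forces $\sigma\simeq\pm2\xi$, which Remark \ref{rem:avoid} excludes). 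You would need to carry out this gradient analysis and identify precisely which frequency configurations are unavoidable before Remark \ref{rem:avoid} can be invoked; as written, the proof does not close for the $(+,+,-)$ and $(-,-,-)$ descents.

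A secondary, smaller point: your preliminary reduction assumes $|\xi_1|\gtrsim|\xi|$ and a single split $A=\{0,2\}$, but as the paper notes, when the smallest frequency is \emph{not} comparable with the largest one the estimate is always non-stationary (one shows $\xi\not\simeq\pm\xi_2$ under $|\xi_3|\ll|\xi|$), so the delicate case is only the one where all four frequencies are comparable — it is exactly there that the above sign analysis must be carried out carefully.
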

\begin{proof}
Throughout this proof, we write $\xi_j=(\eta_j,\zeta_j)$, $j=\emptyset,1,2, 3$. Before we move to the details of the proof, let us explain briefly the rationale. First, since $s\ge 0$, we may always assume that $\xi$ is the largest frequency. Supposing that the frequencies are ordered as $ |\xi_1|\gtrsim |\xi_2|\gtrsim |\xi_3|$, we show that
\begin{equation}\label{eq:interpnls1}
\sup_{\sigma, \alpha, \xi, \xi_2} \int_{\Gamma_\xi^\sigma} \frac{\jap{\xi}^{s+\epsilon}}{\jap{\xi_1}^s\jap{\xi_3}^{2s}}\fia d\xi_3 \lesssim M^{1^-}
\end{equation}
and
\begin{equation}\label{eq:interpnls2}
	\sup_{\sigma, \alpha, \xi_1, \xi_3} \int_{\Gamma_\xi^\sigma} \frac{\jap{\xi}^{s+\epsilon}}{\jap{\xi_1}^s\jap{\xi_2}^{2s}}\fia d\xi_2 \lesssim M^{1^-}.
\end{equation}
The proof of each estimate is split according to the following:
\begin{itemize}
	\item $|\xi|\lesssim 1$, in which case the integral is uniformly bounded.
	\item The smallest frequency is not comparable with the largest one, in which case we claim that the resonance is not stationary in both \eqref{eq:interpnls1} and \eqref{eq:interpnls2}. Indeed, if $|\xi_3|\ll |\xi|$ and $\xi\simeq \pm \xi_2$, we would have
	$$
	\xi\simeq \xi_1+\xi_2 \simeq k\xi, \quad\mbox{for some } k\in\{0,\pm2\},
	$$ 
	which is impossible. This allows a change of variables in one direction and the integration of the weight $\jap{\cdot}^{2s}$ in the other. For the sake of convenience, we will always assume that the change of variables can be performed in the first component $\eta$. As such, \eqref{eq:interpnls1} follows from 
	\begin{equation}\label{eq:subcase12}
		\int_{\Gamma_\xi^\sigma} \frac{\jap{\xi}^{s+\epsilon}}{\jap{\xi_1}^s\jap{\xi_3}^{2s}}\fia d\xi_3 \lesssim \int_{|\zeta_3|\lesssim |\xi|} \frac{\jap{\xi}^{\epsilon}}{\jap{\zeta_3}^{2s}}\fia \frac{1}{|\xi|}d\Phi d\zeta_3 \lesssim M,
	\end{equation}
	while \eqref{eq:interpnls2} is a consequence of
	\begin{equation}\label{eq:subcase11}
		\int_{\Gamma_\xi^\sigma} \frac{\jap{\xi}^{s+\epsilon}}{\jap{\xi_1}^s\jap{\xi_2}^{2s}}\fia d\xi_2 \lesssim \int_{|\zeta_2|\lesssim |\xi|} \frac{\jap{\xi}^{\epsilon}}{\jap{\zeta_2}^{2s}}\fia \frac{1}{|\xi|}d\Phi d\zeta_2 \lesssim M.
	\end{equation}
	\item All frequencies are comparable, in which case the frequency weights can be transferred freely. As such, it suffices to check that we are either in a nonstationary case (where we proceed as above) or in a nondegenerate stationary case. In the latter, for $\xi,\xi_2$ fixed, we renormalize
	$$
	p_j=\frac{\xi_j}{|\xi|},\quad \Phi=|\xi|^2P,\quad |p_1-p_1^0|\ll 1,\quad \nabla P(p_1^0)=0.
	$$
	Since $D_{p_1}^2P$ is nondegenerate, there exists a change of coordinates\footnote{Usually this is ensured by Morse's lemma. Here, however, since $P$ is already a quadratic polynomial, one can make the computation exact, see for example the proof of \cite[Proposition 3]{cos}.} $p_1\mapsto q_1$ such that $$P=P(p_1^0) + \frac{1}{2}\langle D^2_{p_1}P\cdot q_1,q_1\rangle.$$ 
	Hence,
	\begin{align*}
		\int_{\Gamma_\xi^\sigma} \frac{\jap{\xi}^{s+\epsilon}}{\jap{\xi_1}^s\jap{\xi_3}^{2s}}\fia d\xi_3 &\lesssim \int_{|p_1-1|\ll1} |\xi|^{\epsilon-2s}|\xi|^2\mathbbm{1}_{||\xi|^2P-\alpha|<M}dp  \\&\lesssim \int_{|q_1|\ll1} |\xi|^{\epsilon-2s+2}\mathbbm{1}_{\left||\xi|^2\left(P(p_1^0) + \frac{1}{2}\langle D^2_{p_1}P\cdot q_1,q_1\rangle\right)-\alpha\right|<M}dq \lesssim M.
	\end{align*}
\end{itemize}
All that is left is to prove that, up to possible rearrangements between $\xi_1,\xi_2,\xi_3$, there are no degenerate stationary points when all frequencies are comparable. We now split the proof according to the different combinations of signs in \eqref{eq:res_nls}.

\medskip
\noindent\textbf{Case 1. $(+,+,+)$.}  For $\xi,\xi_2$ fixed, $D^2_{\xi_1}\Phi= 4I$, which is nondegenerate. The same happens for $\xi_1,\xi_3$ fixed.

\medskip
\noindent\textbf{Case 2. $(+,+,-)$.} For $\xi_1,\xi_3$ fixed, $D^2_\xi \Phi=4I$ and there are no degenerate critical points. For $\xi,\xi_2$ fixed, we have $D^2_{\xi_1}\Phi=0$, and thus we must guarantee that there are no critical points. In other words, we must avoid the region $\xi_1\simeq -\xi_3$. If $\xi_2\not\simeq-\xi_3$, it suffices to exchange $\xi_1$ with $\xi_2$. If $\xi_1\simeq \xi_2\simeq -\xi_3$, we prove instead
\begin{equation}\label{eq:interpnls3}
	\sup_{\sigma, \alpha, \xi, \xi_3} \int_{\Gamma_\xi^\sigma} \frac{\jap{\xi}^{s+\epsilon}}{\jap{\xi_1}^s\jap{\xi_2}^{2s}}\fia d\xi_2 \lesssim M^{1^-}
\end{equation}
and
\begin{equation}\label{eq:interpnls4}
	\sup_{\sigma, \alpha, \xi_1, \xi_2} \int_{\Gamma_\xi^\sigma} \frac{\jap{\xi}^{s+\epsilon}}{\jap{\xi_1}^s\jap{\xi_3}^{2s}}\fia d\xi_3 \lesssim M^{1^-}.
\end{equation}
As before, there are no degenerate critical points for $\xi,\xi_3$ fixed. For $\xi_1,\xi_2$ fixed, $|\nabla_\xi \Phi|\ll |\xi|$ would imply $\xi\simeq \xi_3$ and then
$$
\xi + \sigma=\xi_1+\xi_2+\xi_3 \simeq -\xi-\xi+\xi \simeq -\xi,\quad \mbox{that is,}\quad \sigma\simeq -2\xi,
$$
which is an avoidable value of $\sigma$ (see Remark \ref{rem:avoid}). Therefore there are no stationary points and \eqref{eq:interpnls4} follows as in \eqref{eq:subcase11}.

\medskip
\noindent\textbf{Case 3. $(-,+,-)$.} Similar to Case 1, for $\xi,\xi_2$ fixed, $D^2_{\xi_1}\Phi= -4I$, which is nondegenerate. For $\xi_1,\xi_3$ fixed,  $D^2_{\xi}\Phi= 4I$, and thus there are no degenerate points.

\medskip
\noindent\textbf{Case 4. $(-,-,-)$.} We proceed as in Case 2. For $\xi,\xi_2$ fixed, $D^2_{\xi_1}\Phi=-4I$. For $\xi_1,\xi_3$ fixed, we always have $D^2_\xi \Phi=0$, which means that we must ensure that there are no critical points. If one had critical points, even counting permuations of $\xi_1,\xi_2$ and $\xi_3$, then
$$
\xi \simeq \xi_1 \simeq \xi_2 \simeq \xi_3,
$$
which would imply that $\sigma\simeq 2\xi$. Once again, this is an avoidable value of $\sigma$. Consequently, we can always guarantee the nonexistence of critical points.
\end{proof}

\bibliography{biblio}
\bibliographystyle{plain}

\end{document}